\newcounter{ENUM}
\def\<{\langle}
\def\>{\rangle}
\def\0{{{\bf 0}}}
\def\OO{{\mathcal O}}
\def\CF{{\mathcal F}}
\def\CH{{\mathcal H}}
\def\CT{{\mathcal T}}
\def\CX{{\mathcal X}}
\def\CY{{\mathcal Y}}
\def\AA{{\mathbb A}}
\def\CC{{\mathbb C}}
\def\FF{{\mathbb F}}
\def\GG{{\mathbb G}}
\def\PP{{\mathbb P}}
\def\QQ{{\mathbb Q}}
\def\RR{{\mathbb R}}
\def\TT{{\mathbb T}}
\def\tP{{\tilde P}}
\def\tSigma{{\tilde \Sigma}}
\newcommand{\trop}{\operatorname{Trop}}
\newcommand{\supp}{\operatorname{Supp}}
\newcommand{\Hilb}{\operatorname{Hilb}}
\newcommand{\ini}{\operatorname{in}}
\newcommand{\rig}{\mbox{\rm \tiny rig}}
\newcommand{\an}{\mbox{\rm \tiny an}}
\newcommand{\et}{\mbox{\rm \tiny \'et}}
\def\ZZ{{\mathbb Z}}
\def\R{{\mathbb R}}
\def\Hom{\operatorname{Hom}}
\def\sep{\operatorname{sep}}
\def\Gal{\operatorname{Gal}}
\def\Spec{\operatorname{Spec}}
\def\ord{\operatorname{ord}}
\def\val{\operatorname{val}}
\newcommand{\Span}{{\operatorname{Span}}}
\newcommand{\margh}[1]{}
\newtheorem{thm}{Theorem}[section]
\newtheorem{prop}[thm]{Proposition}
\newtheorem{lemma}[thm]{Lemma}
\newtheorem{cor}[thm]{Corollary}
\newtheorem{conj}[thm]{Conjecture}
\theoremstyle{definition}
\newtheorem{defn}[thm]{Definition}
\newtheorem{ex}[thm]{Example}
\newtheorem{rem}[thm]{Remark}
\numberwithin{equation}{section}
\begin{document}
\title[Monodromy and Tropical Varieties]{Monodromy Filtrations and the Topology of Tropical Varieties}
\author{David Helm and Eric Katz}
\address{Department of Mathematics\\University of Texas at Austin\\1 University Station C1200\\Austin, TX 78712-0257}

\begin{abstract}
We study the topology of tropical varieties that arise from a certain natural 
class of varieties.
We use the theory of tropical degenerations to construct a natural, ``multiplicity-free'' parameterization
of $\trop(X)$ by a topological space $\Gamma_X$ and give a geometric interpretation
of the cohomology of $\Gamma_X$ in terms of the action of a monodromy operator on the
cohomology of $X$.  This gives bounds on the Betti numbers of $\Gamma_X$ in terms of the 
Betti numbers of $X$ which constrains the topology of $\trop(X)$.   We also obtain a description of the top power of the monodromy operator acting on middle cohomology of $X$ in terms of the volume pairing on $\Gamma_X$.
\end{abstract}

\maketitle

\section{Introduction}

Let $\OO$ be a discrete valuation ring with field of fractions $K$.
Tropicalization is a procedure which takes as input a $d$-dimensional subvariety of an algebraic torus over $K$,
$X\subset (K^*)^n$, and associates to it a balanced weighted rational 
$d$-dimensional polyhedral complex, $\trop(X)\subseteq\RR^n$.  
Several questions naturally arise in this framework; for instance, one may ask what combinatorial properties
of $\trop(X)$ correspond to geometric properties of $X$.
One may also ask what constraints being a tropicalization places on the topology of a polyhedral complex.  In 
\cite{Hacking}, Hacking proved that if $X$ is a subvariety of $(\CC^*)^n$ satisfying a certain 
genericity condition, then the link of the fan $\trop(X)$ only has reduced rational homology in 
the top dimension.  Hacking's result holds for a number of examples, including generic 
intersections of ample hypersurfaces in projective toric varieties.  In \cite[Sec. 10]{Sp3}, 
Speyer showed that if $C$ is a genus $g$ curve in $(K^*)^n$ satisfying a genericity condition 
then there exists a balanced metric graph $\Gamma$ with $b_1(\Gamma)\leq g$ and a parameterization 
$i:\Gamma\rightarrow \trop(C)$ that is affine-linear on edges.  Our results include an 
analogue of Hacking's result for varieties defined over $K$ or as a higher-dimensional 
generalization of Speyer's result.  One may consider the monodromy action of $\Gal(K^{\sep}/K)$
on the \'{e}tale cohomology $H^*_{\et}(X_{K^{\sep}}, \QQ_l)$ and ask what properties of the monodromy action are encoded in $\trop(X)$.  This is the algebraic analogue of the monodromy action of a family of varieties defined over a punctured disk.  The work of the second-named author with Hannah Markwig and Thomas Markwig \cite{KMM} relating the valuation of the $j$-invariant of a plane elliptic curve to the length of the cycle in its tropicalization can be seen in this light.  We give a generalization of that result.

All of our results require that the variety $X$ be {\em sch\"{o}n}, a natural condition introduced 
in \cite{Tevelev} and generalized by \cite{LQ} to the nonconstant coefficient case.
This condition means that the ambient torus $(K^*)^n$ may be compactified to a 
toric scheme $\PP$ over an extension of $\Spec \OO$ such that the intersection of $\CX=\overline{X}\subset \PP$ 
with each open torus orbit $U_P$ is smooth of the expected dimension.  For appropriate $\PP$, $\CX$ is then a simple normal
crossings degenerations of $X$ (c.f.~\ref{prop:degeneration}).   The construction of the desired toric scheme $\PP$ follows from Proposition \ref{prop:nc}, a technical result in polyhedral geometry which has not appeared, to our knowledge, in the literature before.

The existence of such simple normal crossings degenerations for a sch\"on $X$ allows us to construct 
a natural ``parameterizing space'' $\Gamma_X$.  This generalizes a construction introduced by
Speyer \cite{Sp3} when $X$ has dimension $1$.  The space $\Gamma_X$ is closely related to the
dual complex of an appropriate degeneration $\CX$ of $X$; in this guise it already appears implicitly
in \cite{Hacking}, as well as in \cite{HKT}.  Kontsevich-Soibelman use rigid analytic techniques
to construct a similar polyhedral complex, with an integral affine structure,
from a suitable degeneration of $X$ in~\cite{KS}.

The space $\Gamma_X$ we construct is {\em independent} of a choice of model $\CX$ for $X$;
it depends only on $X$ and its embedding in the torus.  Moreover, $\Gamma_X$ comes equipped with a canonical
map to $\trop(X)$.  A choice of sufficiently fine triangulation of $\trop(X)$ gives 
$\Gamma_X$ the structure of a polyhedral complex.  When $\Gamma_X$ is viewed in such a way,
the natural parameterization $\Gamma_X\rightarrow \trop(X)$ is affine-linear on polyhedra.  
This parameterization has several nice properties.
For instance, it is natural under monomial morphisms: if $X$ and $Y$ are sch\"{o}n 
subvarieties of tori $T$ and $T'$ and $\phi:T\rightarrow T'$ is a homomorphism taking $X$ to $Y$, then 
there is an induced map of complexes $\Gamma_X\rightarrow\Gamma_Y$ that commutes with parameterizations.  
Moreover, $\Gamma_X$ satisfies a balancing condition analogous to the one satisfied by all 
tropical varieties.  Finally, it is ``not far'' from $\trop(X)$: if the intersections of $\CX$
with open torus orbits $U_P$ in $\PP$ satisfy certain connectedness hypotheses,
we may equate the cohomology of $\Gamma_X$ and $\trop(X)$ in certain 
degrees.  We hope that parameterizing complexes will be seen as a 
fundamental object in tropical geometry.

Our main results (principally Theorem~\ref{thm:main}, Corollary~\ref{cor:CI}, and Proposition\ref{prop:npower}) relate
the cohomology of $\Gamma_X$ to geometric invariants of $X$.  In particular we consider the
{\'e}tale cohomology $H^*_{\et}(X_{K^{\sep}}, \QQ_l)$; this cohomology comes equipped with a
natural filtration, called the weight filtration.  We construct a natural isomorphism between the
``weight $0$'' subquotient $W_0 H^r_{\et}(X_{K^{\sep}}, \QQ_l)$ arising from this filtration
and the cohomology $H^r(\Gamma_X, \QQ_l)$ of $\Gamma_X$.  We then use this to show
that if $X$ is the generic intersection of ample hypersurfaces in a toric scheme $\PP$, 
then $H^r(\trop(X),\QQ_l)$ vanishes for $1\leq r<\dim X$, a non-constant coefficient analogue of 
Hacking's result.  We also prove results about the monodromy action on the middle cohomology $H^n_{\et}(X_{K^{\sep}}, \QQ_l)$.

The main tool we use is the Rapoport-Zink weight spectral sequence \cite{RZ}.  Under the sch\"{o}n 
condition, after a finite base-extension $\OO^{\prime}$ of $\OO$, we may compactify the ambient torus to 
a toric scheme $\PP$ defined over $\OO^{\prime}$ so that the central fiber of the closure
$\CX$ of $X$ in $\PP$ is a divisor with simple normal crossings.  The divisor, a degeneration of $X$, 
has a stratification coming from intersections of its irreducible components.  The Rapoport-Zink
spectral sequence then gives a very explicit formula for the cohomology on $X$, together with its
weight filtration, in terms of these strata.  The $E_1$-term 
of the weight spectral sequence is formed from the cohomology groups of the strata with 
boundary maps built from the data of restriction maps and Gysin maps.  The spectral sequence 
converges to the cohomology of the general fiber, and the induced filtration is the weight filtration.  
Moreover, the weight spectral sequence 
degenerates at $E_2$.  We thus obtain an explicit description of the smallest nontrivial piece of 
the filtration which is isomorphic to the cohomology groups of $\Gamma_X$.

It is interesting to compare this result to results of Berkovich \cite{B} on rigid analytic spaces.
In particular, Berkovich shows that the cohomology group 
$H^r_{\et}(\overline{X}_{K^{\sep}},\QQ_l)_{-r}$ arising in our result is isomorphic to
the cohomology of the Berkovich space $X^{\an}$ attached to $X$.  Our result thus suggests 
a strong link between $\Gamma_X$ and $X^{\an}$.  In fact, Speyer \cite{Speyer} constructs a
natural map from $X^{\an}$ to $\trop(X)$.  This map factors through the map $\Gamma_X \rightarrow \trop(X)$,
and it is natural to ask if the resulting map $X^{\an} \rightarrow \Gamma_X$ map is a homotopy 
equivalence.  Links between tropical geometry and rigid
geometry have also appeared in works of Einsiedler-Kapranov-Lind~\cite{EKL}, and Payne~\cite{P}.

Under additional hypotheses, one can relate the results above to questions involving monodromy.
A variety defined over $\Spec K$ is analogous to a family of varieties defined 
over a punctured disk.  
The fundamental group of the punctured disk acts on the cohomology of a general fiber of such a
family by monodromy.  The analogue of this monodromy action for varieties over $\Spec K$ is the
action of the inertia group $I_K$ of $K$ on the \'{e}tale cohomology 
$H^*_{\et}(X_{K^{\sep}},\QQ_l)$.  After a 
possible finite base-extension of $\OO$, this action is unipotent, and is 
given by the {\em monodromy operator}
$$N: H^i_{\et}(X_{K^{\sep}},\QQ_l) \rightarrow H^i_{\et}(X_{K^{\sep}},\QQ_l(-1))$$
an endomorphism of the \'etale cohomology that is essentially the (matrix) logarithm of
the action.  (We refer the reader to section~\ref{sec:monodromy} for precise
definitions).  The action of $N$ induces an increasing filtration on the cohomology.
The weight-monodromy conjecture asserts that this filtration coincides (up to a shift in degree) 
with the weight filtration
described above.  Although it is not completely settled, this conjecture is known to be
true in many cases of interest; for instance, it is known $X$ is a curve, surface, or
an abelian variety.  Ito \cite{Ito} has proven the weight-mondromy conjecture when $\OO$ has
equal characteristic.  Thus, in these situations, one can interpret Theorem~\ref{thm:main}
as an isomorphism between the cohomology of $\Gamma_X$ and the smallest
nontrivial piece of the monodromy filtration of the cohomology of $\overline{X}_K$, the closure 
of $X_K$ in the generic fiber of $\PP$:
$$H^r(\Gamma_X, \QQ_l) \cong H^r_{\et}(\overline{X}_{K^{\sep}},\QQ_l)_{-r}.$$
As a consequence, Corollary \ref{cor:bound} bounds 
the Betti numbers of $\Gamma_X$ in terms of those of $X$:
$$b_r(\Gamma_X) \leq \frac{1}{r+1} b_r(X)$$
generalization Speyer's result on curves.
  
We apply our techniques to get a description of some of the monodromy action and not just of the monodromy filtration.  In Proposition \ref{prop:npower}, we give an interpretation of the top power of monodromy operator acting on the middle-dimensional cohomology
\[N^d:H^d_{\et}(X_{K^{\sep}},\QQ_l)_{d}/H^d_{\et}(X_{K^{\sep}},\QQ_l)_{d-1}\rightarrow H^d_{\et}(X_{K^{\sep}},\QQ_l)_{-d}(-d)\]
using the isomorphisms 
\begin{eqnarray*}
H^d_{\et}(X_{K^{\sep}},\QQ_l)_{d}/H^d_{\et}(X_{K^{\sep}},\QQ_l)_{d-1}&\cong&H_d(\Gamma,\QQ_l)(-d)\\ 
H^d_{\et}(\overline{X}_{K^{\sep}},\QQ_l)_{-d}&\cong&H^d(\Gamma_X, \QQ_l).
\end{eqnarray*}
The operator can be viewed as a bilinear pairing on $H_d(\Gamma_X, \QQ_l)$ in which case it is the volume pairing that takes a pair of top-dimensional cycles to the oriented volume of their intersection.  This specializes to the length pairing in the case of curves.  In the case of genus $1$ curves, by a straightforward application of the conductor-discriminant formula, we are able to recover the spatial sch\"on analogue of a result proved by the second-named author with H. Markwig andT. Markwig \cite{KMM}: the valuation of the $j$-invariant of an elliptic curve $X$ with potentially multiplicative reduction is equal to $-a$ where $a$ is the length of the unique cycle in $\Gamma_X$.  

Our techniques are very similar to those of Hacking and Speyer.  Hacking uses a spectral sequence coming 
from a weight filtration on a complex of differential forms while we use the Rapoport-Zink spectral 
sequence.  Speyer's results use a resolution of the structure sheaf of a degeneration of a curve $C$ coming 
from a stratification induced by a toric scheme while we use a locally constant sheaf.

We should mention some related results.  In \cite{GS}, Gross and Siebert construct 
a scheme $X_0$ from an integer affine manifold and a toric polyhedral decomposition.  If $X_0$ is 
embedded in a family $\CX$ over $\CC[[t]]$, they are able to determine the limit mixed Hodge structure 
in terms of the combinatorial data.  In \cite{TMNF}, the second-named author and Stapledon give a description of the Hodge-Deligne polynomial of the limit mixed Hodge structure of a sch\"{o}n family of varieties over a punctured disk in terms of $\trop(X)$ and initial degenerations of $X$.

We would like to thank Brian Conrad, Richard Hain, Kalle Karu, Sean Keel, Sam Payne, Zhenhua Qu, Bernd Siebert, Martin Sombra, 
David Speyer, Alan Stapledon, and Bernd Sturmfels for valuable discussions.

\section{Toric Schemes}

We begin by reviewing a construction that attaches a degenerating family of
toric varieties over a discrete valuation ring to a rational polyhedral complex in $\RR^n$.
This has appeared several times in the literature~\cite{Speyer}~\cite{NS}
~\cite{Smirnov}.  We follow the approach of~\cite{NS} here.  Fix a discrete
valuation ring $\OO$, with field of fractions $K$ and residue field $k$, and 
a uniformizer $\pi$ of $\OO$.

\begin{defn} A rational polyhedral complex in $\RR^n$ is a collection 
$\Sigma$ of finitely many convex rational polyhedra
$P \subset \RR^n$ with the following properties:
\begin{itemize}
\item If $P \in \Sigma$ and $P^{\prime}$ is a face of $P$, then $P^{\prime}$
is in $\Sigma$.
\item If $P,P^{\prime} \in \Sigma$ then $P \cap P^{\prime}$ is a face of both
$P$ and $P^{\prime}$.
\end{itemize}
If, in addition, the union $\bigcup_{P\in\Xi} P$  is equal to $\R^n$, then $\Sigma$ is said to be {\em complete}.
\end{defn}

Given a $\Sigma$ as above, we can construct a fan $\tSigma$ in 
$\RR^n \times \RR_{\geq 0}$ as follows: for each $P \in \Sigma$ let
$\tP$ be the closure in $\RR^n \times \RR_{\geq 0}$ of the set
$$\{(x,a) \subset \RR^n \times \RR_{>0} : \frac{x}{a} \in P\}.$$
Then $\tP$ is a rational polyhedral cone in $\RR^n \times \RR_{>0}$.  Its 
facets come in two types:
\begin{itemize}
\item cones of the form $\tP^{\prime}$, where $P^{\prime}$ is a facet of $P$, and
\item the cone $P_0 = \tP \cap (\RR^n \times \{0\})$, which is the limit as $a$ goes to zero
of the polyhedron $aP$ in $\RR^n$.
\end{itemize}
We let $\tSigma$ be the collection of cones of the form $\tP$ and $P_0$ for $P$ in $\Sigma$.
If $\Sigma$ is a subcomplex of a complete rational polyhedral complex, we say that $\Sigma$ is {\em completable} and, by Corollary 3.12 of \cite{BGS}, $\tSigma$ is a rational polyhedral fan in $\RR^n \times \RR_{\geq 0}$.   Note that
$\Sigma = \tSigma \cap (\RR^n \times \{1\})$.  On the other hand
the fan $\Sigma_0$ given by $\tSigma \cap (\RR^n \times \{0\})$ is the limit as
$a$ approaches zero of the polyhedral complexes $a\Sigma$.

\begin{rem} \rm In fact, by results of \cite{BGS}, the association $\Sigma \mapsto \tSigma$ defines a bijection
between the set of complete polyhedral complexes in $\RR^n$ and the set of complete fans in 
$\RR^n \times \RR_{\geq 0}$ for which every cone contained in $\RR^n \times \{0\}$
is the boundary of a cone that meets $\RR^N \times \RR_{>0}$.  Its inverse is
$\tSigma \mapsto \tSigma \cap (\RR^n \times \{1\}).$
\end{rem}

Let $X(\tSigma)_{\ZZ}$ be the toric scheme over $\ZZ$ associated to the fan $\tSigma$.  (The
construction associating a toric variety to a fan is usually given over a field, but works
just as well with coefficients in $\ZZ$.)
Projection from $\RR^n \times \RR_{\geq 0}$ to $\RR_{\geq 0}$ induces a map of 
fans from $\tSigma$ to the fan
$\{0,\RR_{\geq 0}\}$ associated to $\AA^1_{\ZZ}$.  This gives rise to a map of
toric varieties $\pi_{\ZZ}: X(\tSigma)_{\ZZ} \rightarrow \AA^1_{\ZZ}$.  As remarked in~\cite{NS},
this map is flat and torus equivariant.  Let $\iota: \Spec \OO \rightarrow \AA^1_{\ZZ}$
be the map corresponding to the map $\ZZ[t] \rightarrow \OO$ that takes $t$ to $\pi$. 
We let $X(\tSigma)$ be the scheme over $\OO$ obtained by base change from $X(\tSigma)_{\ZZ}$
via $\iota$, and let $\pi: X(\tSigma) \rightarrow \OO$ be the projection.

We summarize results of~\cite{NS} concerning this construction:
\begin{itemize}
\item The general fiber $X(\tSigma) \times_{\Spec \OO} \Spec K$ is isomorphic
to the toric variety over $K$ associated to $\Sigma_0$.
\item If $\Sigma$ is {\em integral}, i.e. the vertices of every polyhedron in $\Sigma$
lie in $\ZZ^n$, then the special fiber $X(\tSigma)_k = X(\tSigma) \times_{\Spec \OO} \Spec k$
is reduced.   
\item There is an inclusion-reversing bijection
between closed torus orbits in $X(\tSigma)_k$ and polyhedra $P$ in $\Sigma$; the irreducible 
components of $X(\tSigma)_k$ correspond to vertices in $\Sigma$; the intersection of a collection
of irreducible components corresponds to the smallest polyhedron in $\Sigma$ containing all
of their vertices.
\end{itemize}

Note that adjoining a $d$th root of $\pi$ to $\OO$ has the effect
of rescaling $\Sigma$ by $d$; that is, if $\OO^{\prime} = \OO[\pi^{\frac{1}{d}}]$, then
the base change of the family $X(\tSigma) \rightarrow \OO$ is the family 
$X(\widetilde{(d\Sigma)}) \rightarrow \OO^{\prime}$.  
In particular,
given any toric scheme coming from a polyhedral complex $\Sigma$, we can choose $d$
such that $d\Sigma$ is integral; after taking a suitable ramified base change of $\OO$
the special fiber of the family $X(\tSigma) \rightarrow \OO$ will be reduced.

We will be particularly interested in degenerations of toric varieties in which the special
fiber is a divisor with simple normal crossings.  These are easy to construct, because the
boundary of a smooth toric variety is always a divisor with simple normal crossings.  We make use of the following result in polyhedral combinatorics:

\begin{prop} \label{prop:nc}
Let $\Sigma$ be a complete rational polyhedral complex in $\RR^n$. 
There exists an integer $d$, and a subdivision $\Sigma^{\prime}$ of $d\Sigma$ such that the general
fiber of the scheme $X(\tSigma^{\prime})$ is a smooth toric variety
and the special fiber of $X(\tSigma^{\prime})$ is a divisor with simple normal crossings.  Moreover, if the recession fan $\Sigma_0$ is already simplicial and unimodular, $\Sigma^{\prime}$ can be chosen to have $\Sigma_0'=\Sigma_0$.
\end{prop}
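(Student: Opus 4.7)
The plan is to reduce the proposition to exhibiting an integral, unimodular refinement of $\tSigma$, and to construct such a refinement via a Knudsen--Mumford--Waterman-style argument applied after a preliminary rescaling.

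Combining the bulleted summary of [NS] above with standard toric geometry, $X(\tSigma^{\prime})$ is regular whenever $\tSigma^{\prime}$ is a unimodular fan in $\ZZ^n\times\ZZ$; the generic fiber $X(\Sigma_0^{\prime})$ is smooth iff $\Sigma_0^{\prime}$ is unimodular; and the special fiber is reduced iff $\Sigma^{\prime}$ is integral. When both conditions hold, the irreducible components of the special fiber --- corresponding to non-horizontal rays of $\tSigma^{\prime}$ --- are smooth toric divisors meeting transversally along torus-invariant strata, so the special fiber is SNC. Hence it suffices to produce $d$ and a subdivision $\Sigma^{\prime}$ of $d\Sigma$ such that $\Sigma^{\prime}$ is integral and $\tSigma^{\prime}$ is unimodular.

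To construct the subdivision, first pick $d_1$ so that $d_1\Sigma$ is integral. Then invoke the Knudsen--Mumford--Waterman theorem, extended compatibly across the complex $d_1\Sigma$: there is $d_2$ such that every polyhedron of $d_1d_2\Sigma$ admits a unimodular lattice triangulation, and these triangulations glue across shared faces into a polyhedral subdivision $\Sigma^{\prime}$ of $d\Sigma$ (with $d=d_1d_2$) into unimodular lattice simplices. For any such simplex $P$ with vertices $v_0,\ldots,v_k\in\ZZ^n$, the edge vectors $v_1-v_0,\ldots,v_k-v_0$ extend to a $\ZZ$-basis of $\ZZ^n$. Consequently the generators $(v_0,1),\ldots,(v_k,1)$ of $\tP$, together with the primitive generators of its recession face, span a saturated sublattice of $\ZZ^n\times\ZZ$ provided the recession face is itself a unimodular cone. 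A preliminary unimodular refinement of $\Sigma_0$, coherently propagated to every $\tP$ in $\tSigma$, reduces us to that case and yields a unimodular $\tSigma^{\prime}$.

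For the moreover clause, the assumption that $\Sigma_0$ is already simplicial and unimodular removes the need for any refinement of the recession fan. Since the KMW step inserts new rays only in the relative interior of each $\tP$, the face $\tP\cap(\RR^n\times\{0\})$, which is a cone of $\Sigma_0$, is left untouched and $\Sigma_0^{\prime}=\Sigma_0$. The main obstacle throughout is that unimodularity interacts poorly with rescaling: producing a unimodular refinement first and then rescaling to restore integrality generally destroys unimodularity, since rescaling of $\Sigma$ amounts to a change of lattice in the $\RR^n$-direction. The refinement must be performed \emph{after} the rescaling using only rays of the form $(v,1)$ with $v\in\ZZ^n$, which is precisely what KMW provides on a sufficiently dilated complex; the delicate combinatorial step is the coherent gluing of these unimodular triangulations across the complex while respecting the (possibly subdivided) recession fan.
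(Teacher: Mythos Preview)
Your reduction to finding an integral $\Sigma'$ with $\tSigma'$ unimodular is correct, and the overall shape of the argument --- separate the bounded and recession parts, triangulate the bounded part via KKMS --- matches the paper's strategy.  However, there is a genuine gap at the step where you combine the two parts.

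You assert that if $P'$ is a unimodular lattice simplex and its recession face $Q_0$ is a unimodular cone, then the generators $(v_0,1),\dots,(v_k,1)$ of $\tP'$ together with the primitive generators of $Q_0$ span a saturated sublattice of $\ZZ^n\times\ZZ$.  This is false in general.  Take $n=2$, $P'$ the segment from $(0,0)$ to $(2,1)$ (unimodular, since $(2,1)$ is primitive) and $Q_0=\RR_{\geq 0}(0,1)$ (unimodular).  The cone $\widetilde{P'+Q_0}$ has primitive ray generators $(0,0,1),(2,1,1),(0,1,0)$, which generate a sublattice of index~$2$ in $\ZZ^3$.  Unimodularity of $P'$ and $Q_0$ separately only controls the lattices $N_{P'}$ and $N_{Q_0}$ in their respective linear spans; it says nothing about whether $N_{P'}+N_{Q_0}$ is saturated in $\ZZ^n$.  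Your ``preliminary unimodular refinement of $\Sigma_0$, coherently propagated'' does not address this: it refines $Q_0$ but leaves the relation between $N_{P'}$ and $N_{Q_0}$ uncontrolled, and an arbitrary KMW triangulation of the bounded part can certainly produce edges like the one above.

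The paper closes this gap by a different ordering of operations.  Before invoking KKMS, it applies Fulton's resolution algorithm to the \emph{entire fan} $\tSigma$ in $\RR^{n+1}$, producing a unimodular fan $\tSigma_1$.  Slicing at a suitable height then yields polyhedra $P+Q_0$ for which one can \emph{prove} the key lemma $N_P+N_{Q_0}=N_{P+Q_0}$, precisely because $\tP+Q_0$ was already unimodular in $\ZZ^{n+1}$.  Only then is KKMS applied to the bounded complex $\Sigma_1^b$, and the new simplices $P'$ satisfy $N_{P'}=N_P$ (since they are full-dimensional in $P$), so the lemma transfers to $P'+Q_0$.  This two-step procedure --- Fulton on the whole fan first, then KKMS on the bounded part --- is exactly what is missing from your argument, and the lemma $N_P+N_{Q_0}=N_{P+Q_0}$ is the technical heart of the proof.
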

\begin{proof} Choose an integer $l_1$ such that $l_1\Sigma$ is integral.
Fulton~\cite[Sec. 2.6]{Fulton} gives an algorithm for constructing a subdivision
$\tSigma^{\prime}$ of the fan $\widetilde{l_1\Sigma}$ such that all the cones of $\tSigma_1$
are simplicial and unimodular.  Pick an integer $l_2$ sufficiently divisible so $\Sigma_1=\tSigma^{\prime} \cap (\RR^n \times \{l_2\})$ is integral.  $\Sigma_1$ is a subdivision of $\RR^n$ with the property that each of its recession cones is simplicial.  Because $\tSigma_1$ is simplicial each of its cones is of the form $\tilde{P}+Q_0$ where $P$ is a 
simplex in $\Sigma_1$, $Q_0$ is a cone in the recession fan $(\Sigma_1)_0$, and $\tilde{P}\cap Q_0=\{0\}$.    Consequently, the corresponding polyhedron of $\Sigma_1$ is $(\tilde{P}+Q_0)\cap (\RR^n\times\{l_2\})=P+Q_0$.
For a polyhedron $F$ in $\RR^k$, let $N_F=\ZZ^k\cap \Span_\RR(F-w)$ 
where $w$ is a point of $F$.  If $F$ is a rational polytope, then $N_F$ has the property a basis of it can be extended to a basis of $\ZZ^k$.

We claim $N_P+N_{Q_0}=N_{P+Q_0}$ for every cone $\tilde{P}+Q_0$ of $\tSigma_1$.  It is clear that $N_P+N_{Q_0}\subseteq N_{P+Q_0}$.  The inclusion $\ZZ^n\hookrightarrow\ZZ^{n+1}=\ZZ^n\times\ZZ$ given by $x\mapsto (x,l_2)$ identifies $N_{P+Q_0}$ with the intersection of $N_{\tilde{P}+Q_0}$ with $\ZZ^n\times \{l_2\}$.   Since $\tilde{P}+Q_0$ is unimodular, 
$N_{\tilde{P}}+N_{Q_0}$ is equal to $N_{\tilde{P}+Q_0}$.    Therefore, if $x\in N_{P+Q_0}$, $(x,l_2)=x_{\tilde{P}}+x_{Q_0}$ where $x_{\tilde{P}}\in \tilde{P}$ and $x_{Q_0}\in Q_0$.  Since the last coordinate of $x_{Q_0}$ is $0$, $x_{\tilde{P}}=(x_P,l_2)$ for $x_P\in N_P$.  It follows that $x=x_P+x_{Q_0}$.

Let $\Sigma_1^b$ be the union of the bounded polyhedra of $\Sigma_1$. 
By an important step of the proof of semi-stable reduction \cite[Ch. 3, Thm 4.1]{KKMS}, there is an integer $l_2$ and a unimodular triangulation ${\Sigma_1^b}'$ of $l_2\Sigma_1^b$.  This induces a subdivision of $l_2\Sigma_1$ where the polyhedra whose relative interior is contained in the relative interior of $l_2(P+Q_0)$  are of the form $P'+Q_0$ where $P'$ is a simplex in ${\Sigma_1^b}'$ whose relative interior is contained in the the relative interior of $l_2P$.  We call this subdivision $\Sigma'$.  It is simplicial by construction.  We claim that it is also unimodular.   It suffices to show that maximal cones in $\tSigma'$ are unimodular.  Let $\tilde{P}'+Q_0$ be a  maximal cone in $\tSigma'$.  Then the relative interior of $P'$ is contained in the relative interior of $l_2P$ with $\dim P=\dim P'$.   Since $P'$ is unimodular, its $\ZZ$-affine span is $N_{P'}$.    Consequently, since $N_{P'}+N_{Q_0}=N_P+N_{Q_0}=N_{P+Q_0}$, we see that any element of $N_{P+Q_0}$ can be written as integer combination $\sum m_i v_i+\sum n_iw_i$ where $v_i$ are vertices of $P'$, $w_i$ are the primitive vectors along the rays of $Q_0$, and $\sum m_i=1$.  Consequently, any element of $N_{P+Q_0}\times \{1\}$ can be written as  an integer combination of the primitive vectors along the rays of $\tilde{P'}+Q_0$.  Consequently these vectors generate $N_{\tilde{P}+Q_0}\subset\ZZ^{n+1}$.  Therefore $\tilde{P}'+Q_0$ is smooth.

If $\Sigma_0$ was simplicial and unimodular to begin with, none of these steps would have affected the cones $Q_0$ of $\Sigma_0$

The upshot is that $X(\tSigma^{\prime})$ is a smooth toric variety
with a birational morphism $X(\tSigma^{\prime}) \rightarrow X(\tSigma)$.  The induced map
$X(\tSigma^{\prime}) \rightarrow \OO$ is the toric scheme associated to
the integral polyhedral complex $\Sigma^{\prime}$.
The general fiber of $X(\tSigma^{\prime})$ over $\OO$ corresponds to the fan 
$\Sigma^{\prime}_0$, and is therefore smooth.  The special fiber is a union of
irreducible components of the boundary of $X(\tSigma^{\prime})$ and is therefore
a divisor of $X(\tSigma^{\prime})$ with simple normal crossings.
\end{proof}

\section{Tropical Degenerations}

We now describe the applications of tropical geometry to the study of degenerations of varieties over $K$.
These techniques have their origins in the Speyer's thesis \cite{Speyer}.  
The approach we take here is due to Tevelev \cite{Tevelev} in the ``constant coefficient case''; 
the extension of Tevelev's work to the case of an arbitrary DVR done by Luxton and Qu \cite{LQ}.

Let $\overline{K}$ be an algebraic closure of $K$.  There is a unique valuation 
$$\ord: \overline{K} \rightarrow \QQ$$
such that $\ord(\pi) = 1$.  

Let $\CT \cong \GG_m^n$ be a split $n$-dimensional torus over $\OO$, and let
$T = \CT \times_{\OO} K$ be the corresponding torus over $K$.  
The valuation $\ord$
gives rise to a map
$$\val: \CT(\overline{K}) \rightarrow \QQ^n,$$
by fixing an isomorphism of $\CT$ with $\GG_m^n$ (and hence an isomorphism of $\CT(\overline{K})$ 
with $(\overline{K}^*)^n$.)
Let $X$ be a closed subvariety of $T$, defined over $K$.

\begin{defn}[\cite{EKL}, 1.2.1]: The tropical variety $\trop(X)$ associated to $X$ is
the closure of $\val(X(\overline{K}))$ in $\RR^n$.
\end{defn}

Given such an $X$, one can ask for a well-behaved compactification $\overline{X}$ of $X$,
and a well-behaved degeneration of $\overline{X}$ over $\OO$.  The problem of
finding such a degeneration is intimately connected to the set $\trop(X)$.

Let $\Sigma$ be a completable rational polyhedral complex in $\RR^n$, and
let $\PP$ be the corresponding toric scheme over $\OO$.  Identify
the group of cocharacters of $\CT$ with $\ZZ^n$ in $\RR^n$; this identifies $\TT$ with 
the open torus orbit on $\PP$.

We can thus take the closure of $\CX$ of $X$ in $\PP$.  By \cite{Speyer}, 2.4.1, 
the scheme $\CX$ is proper over $\OO$ if, and only if, $\supp \Sigma$ contains
$\trop(X)$.  We assume henceforth that $\supp \Sigma$ contains $\trop(X)$.  Let $\overline{X}$ be the fiber
of $\CX$ over $K$, and $\CX_k$ be the special fiber of $\CX$.  The natural multiplication map
$$\CT \times_{\OO} \PP \rightarrow \PP$$ 
restricts to a multiplication map
$$m: \CT \times_{\OO} \CX \rightarrow \PP.$$ 

\begin{defn} The pair $(X,\PP)$ is {\em tropical} if the map
$$m: \CT \times_{\OO} \CX \rightarrow \PP$$
is faithfully flat, and $\CX \rightarrow \OO$ is proper.
\end{defn}

We then have the following, due to Tevelev \cite{Tevelev} in the constant coefficient case.
In the non-constant coefficient case they can be found in \cite{LQ}.

\begin{prop} A subvariety $X\subset(\overline{K}^*)^n$ admits a tropical pair $(X,\PP)$.  
\end{prop}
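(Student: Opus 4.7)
The plan is to produce a completable rational polyhedral complex $\Sigma$ in $\RR^n$ with $\supp \Sigma = \trop(X)$ and take $\PP = X(\widetilde{\Sigma})$. Properness of $\CX \to \Spec \OO$ is then immediate from the cited criterion of Speyer (2.4.1), since $\supp \Sigma \supseteq \trop(X)$; the substantive content is faithful flatness of the multiplication map $m \colon \CT \times_\OO \CX \to \PP$.

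For the polyhedral structure, the Bieri--Groves structure theorem for tropical varieties (extended to non-constant coefficients) identifies $\trop(X)$ as the support of a pure $d$-dimensional rational polyhedral complex $\Sigma_0$, where $d = \dim X$. Extending $\Sigma_0$ to a complete rational polyhedral complex of $\RR^n$ by triangulating the closure of the complement compatibly with $\Sigma_0$ on the common boundary verifies that $\Sigma_0$ is completable. Take $\Sigma = \Sigma_0$, refine if desired to make $\PP$ smooth via Proposition~\ref{prop:nc}, and form $\CX$ as the closure of $X$ in $\PP$.

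The key input for faithful flatness, due to Tevelev in the constant coefficient case and Luxton--Qu in general, is an orbit-by-orbit dimension count: because $\supp \Sigma = \trop(X)$, the closure $\CX$ meets every torus orbit $O_P$ of $\PP$ in the expected dimension
\[ \dim(\CX \cap O_P) = d - \dim P \quad \text{for every } P \in \Sigma. \]
This is established via initial degenerations: the intersection $\CX \cap O_P$ is identified, up to the $\CT$-action, with the initial degeneration $\ini_w(X)$ for any $w$ in the relative interior of $P$, and $w \in \trop(X)$ gives $\ini_w(X)$ nonempty of the expected dimension $d - \dim P$. Granted this, the map $m$ is $\CT$-equivariant for translation on the first factor of the source and on the target, so its fiber dimension is constant on each $\CT$-orbit; a direct computation shows every fiber over a point of $O_P$ has dimension $\dim(\CX \cap O_P) + (n - \dim O_P) = d$. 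With $\PP$ smooth and the source Cohen--Macaulay, equidimensionality of fibers gives flatness by miracle flatness, and surjectivity of $m$ is immediate from $\CX$ meeting every torus orbit of $\PP$.

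The main obstacle is the orbit-by-orbit dimension count --- controlling how $\CX$ meets each boundary stratum in terms of the combinatorics of $\trop(X)$ --- which is the heart of the tropical compactification theorems of Tevelev and Luxton--Qu and rests on the characterization of the tropical variety in terms of initial degenerations.
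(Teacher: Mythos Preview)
The paper does not prove this proposition; it cites Tevelev~\cite{Tevelev} for the constant-coefficient case and Luxton--Qu~\cite{LQ} in general. Your sketch has the right overall shape, but two points need correction.

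First, a small slip in the dimension count: $\ini_w X$ has dimension $d$, not $d - \dim P$. The closure of $w(\pi)X$ in $\CT$ is integral and dominates $\Spec \OO$, hence is flat over $\OO$, so its special fiber $\ini_w X$ has dimension $d$. Combined with the identification $\ini_w X \cong \CT_P \times (\CX \cap U_P)$ (a direct fiber computation of $m^{-1}(p)$, which does not presuppose flatness of $m$), this gives $\dim(\CX \cap U_P) = d - \dim P$ as you want.

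Second, and this is the real gap: miracle flatness requires the source $\CT \times_\OO \CX$, equivalently $\CX$, to be Cohen--Macaulay, and you have not justified this. It is not automatic --- the closure of even a smooth $X$ in a toric scheme can acquire embedded components or worse at the boundary. You flag the orbit-by-orbit dimension count as ``the main obstacle,'' but in fact that count is the easy part (it drops out of the initial-degeneration description of $\trop(X)$ as above); the genuine work in \cite{Tevelev} and \cite{LQ} lies in the passage from equidimensional fibers to flatness. This is done not by a bare appeal to miracle flatness but by choosing $\Sigma$ to refine the Gr\"obner complex of the ideal of $X$: on each affine chart of $\PP$ the closure of $X$ is then explicitly presented as a one-parameter Gr\"obner family, and flatness of $m$ reduces chart by chart (using $\CT$-equivariance) to the standard flatness of Gr\"obner degenerations. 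So your outline is correct up to the point where you invoke miracle flatness; there you need to replace the unproved Cohen--Macaulay hypothesis with the Gr\"obner-theoretic input that the cited references actually use.
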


\begin{prop}
Suppose $(X,\PP)$ is tropical and let
$\PP^{\prime} \rightarrow \PP$ be a morphism of toric schemes corresponding
to a refinement $\Sigma^{\prime}$ of $\Sigma$.  Then $(X,\PP^{\prime})$ is also tropical. 
\end{prop}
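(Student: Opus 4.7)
I would prove the proposition by verifying the two conditions defining a tropical pair separately: properness of $\CX'\to\Spec\OO$, and faithful flatness of $m':\CT\times_\OO\CX'\to\PP'$. The properness is essentially free: since $\Sigma'$ refines $\Sigma$ we have $\supp\Sigma'=\supp\Sigma\supseteq\trop(X)$, so Speyer's criterion (\cite{Speyer}, 2.4.1) immediately yields properness of $\CX'\to\Spec\OO$.

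For the faithful flatness of $m'$, my plan is to realize $m'$ as a base change of $m$ along the toric refinement morphism $\pi:\PP'\to\PP$, and then invoke the stability of faithful flatness under base change. Using the $\CT$-equivariance of $\pi$, one can define a natural morphism
\[
\alpha:\CT\times_\OO\CX'\longrightarrow(\CT\times_\OO\CX)\times_{\PP}\PP',\qquad (t,x')\longmapsto(t,\pi(x'),t\cdot x'),
\]
whose composition with projection onto $\PP'$ is exactly $m'$, and whose target is faithfully flat over $\PP'$ as the pullback of $m$ along $\pi$. It will then suffice to show that $\alpha$ is an isomorphism.

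The main technical hurdle, and the place where the hypothesis that $(X,\PP)$ is tropical is essential, is showing $\alpha$ is an isomorphism, which amounts to the scheme-theoretic identity $\pi^{-1}(\CX)=\CX'$ inside $\PP'$. The inclusion $\CX'\subseteq\pi^{-1}(\CX)$ is automatic, since $\CX'$ is the closure of $\pi^{-1}(X)=X$ in $\PP'$. Any extra irreducible components of $\pi^{-1}(\CX)$ would have to be supported on the exceptional locus of $\pi$, which maps to proper torus orbits of $\PP$; for such a component to exist, $\CX$ would have to contain an entire proper torus orbit of $\PP$. Faithful flatness of $m$ (Tevelev's criterion) forces $\CX\cap U$ to have codimension $\codim(U)$ in $\CX$ for every torus orbit $U\subset\PP$, so the irreducible variety $\CX=\overline{X}$ cannot contain any proper torus orbit. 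It follows that $\pi^{-1}(\CX)=\CX'$, $\alpha$ is an isomorphism, and $m'$ is faithfully flat as required.
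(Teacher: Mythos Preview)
The paper does not give its own proof of this proposition; it simply attributes the result to Tevelev~\cite{Tevelev} (constant coefficient case) and Luxton--Qu~\cite{LQ} (general case). Your overall strategy---reducing to the scheme-theoretic equality $\pi^{-1}(\CX)=\CX'$ so that $m'$ becomes the base change of $m$ along $\pi$---is exactly Tevelev's approach and is the right one.

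The gap is in your justification of $\pi^{-1}(\CX)=\CX'$. You assert that an extra irreducible component of $\pi^{-1}(\CX)$ would force $\CX$ to contain an entire torus orbit of $\PP$, but this implication is not explained and does not obviously hold: an extra component $Z$ lies over $\CX\cap\pi(E')$, where $\pi(E')$ is a union of orbit closures, yet nothing forces $\pi(Z)$ to be an entire orbit rather than some proper subvariety of $\CX$ sitting in the boundary. You also argue only at the level of irreducible components and say nothing about possible embedded components or non-reduced structure on $\pi^{-1}(\CX)$, which is needed for the \emph{scheme-theoretic} equality.

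The clean fix uses the flatness you already have, applied to the whole source rather than to $\CX$ alone. The base-changed multiplication map $\CT\times_\OO\pi^{-1}(\CX)\to\PP'$ is flat and $\PP'$ is integral, so every associated point of the source lies over the generic point of $\PP'$ and hence in the open set lying over $T$. That open set is $T\times_K X$, which is integral. Therefore $\CT\times_\OO\pi^{-1}(\CX)$ has a unique associated point and is itself integral, equal to the closure of $T\times_K X$, namely $\CT\times_\OO\CX'$. Cancelling the faithfully flat factor $\CT$ gives $\pi^{-1}(\CX)=\CX'$, and the rest of your argument then goes through unchanged.
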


\begin{prop} If $(X,\PP)$ is a tropical pair then $\supp \Sigma = \trop(X)$.
\end{prop}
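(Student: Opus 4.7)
The plan is to establish the nontrivial inclusion $\supp \Sigma \subseteq \trop(X)$; the reverse containment is built into the hypotheses, coming from properness via Speyer's criterion cited at the start of the section. The approach will extract information from the faithful flatness of the multiplication map $m \colon \CT \times_\OO \CX \to \PP$ and combine it with the orbit--polyhedron dictionary on $\PP$ and the Fundamental Theorem of Tropical Geometry.

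First I would observe that faithful flatness of $m$ forces surjectivity, and that the image of $m$ is a union of $\CT$-orbits on $\PP$, since it is stable under the $\CT$-action by translation on the first factor of the source. Consequently $\CX$ meets every $\CT$-orbit of $\PP$ nontrivially. Restricting to the orbits in the special fiber, which are in inclusion-reversing bijection with the polyhedra of $\Sigma$, this yields $\CX \cap U_P \neq \emptyset$ for every $P \in \Sigma$.

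Next, for a vertex $v$ of $\Sigma$ the open orbit $U_v$ sits inside the irreducible component of $\PP_k$ indexed by $v$, and by construction of the toric degeneration the intersection $\CX \cap U_v$ is precisely the initial degeneration $\ini_v(X)$. By the previous step this initial degeneration is nonempty, and by the Fundamental Theorem of Tropical Geometry this is equivalent to $v \in \trop(X)$. To promote from vertices of $\Sigma$ to an arbitrary rational point $v$ of $\supp \Sigma$, I would refine $\Sigma$ to a complex $\Sigma'$ in which $v$ is a vertex; the previous proposition guarantees that $(X,\PP')$ is again tropical, so the vertex argument applies. Since rational points are dense in $\supp \Sigma$ and $\trop(X)$ is closed, this gives $\supp \Sigma \subseteq \trop(X)$.

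The step that requires the most care is the identification $\CX \cap U_v \cong \ini_v(X)$ and the invocation of the Fundamental Theorem. One must be attentive to scalings and base changes: a rational but non-integral $v$ becomes a vertex only of an integral subcomplex of $d\Sigma$ for a suitable $d$, so one passes to $\OO[\pi^{1/d}]$, and one must verify that this base change preserves the tropical property and does not alter $\trop(X)$ (both of which are standard). With this bookkeeping in place, the argument is essentially a translation of ``faithful flatness'' into the tropical statement ``all rational points of $\supp\Sigma$ lie in $\trop(X)$.''
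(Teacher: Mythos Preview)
The paper does not supply its own proof of this proposition; it is stated together with the two preceding propositions as results imported from Tevelev and Luxton--Qu, with citations in lieu of argument. Your proof is correct and is essentially the standard one found in those references. In fact, the key computation you need---that for $w$ in the relative interior of a cell $P$ one has $\ini_w X \cong \CT_P \times (\CX \cap U_P)$, so that nonemptiness of $\CX \cap U_P$ forces $w \in \trop(X)$---is carried out explicitly in the paper in the paragraphs immediately \emph{following} this proposition, leading up to Lemma~\ref{lemma:comp}. So all the ingredients you use are present in the text, just placed after the statement rather than before.

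One small streamlining is available: the detour through refinements (promoting an arbitrary rational $w$ to a vertex of a refined $\Sigma'$) is not really needed. Once you know $\CX \cap U_P \neq \emptyset$ for every cell $P$, the fiber computation above gives $\ini_w X \neq \emptyset$ directly for every rational $w$ in the relative interior of $P$ (after the same ramified base change you already acknowledge), without altering $\Sigma$. Density of rational points and closedness of $\trop(X)$ then finish as in your argument. This is a cosmetic improvement only; your version is perfectly sound.
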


Following Speyer (\cite{Speyer}, 2.4) If $(X,\PP)$ is a tropical pair, we call $\overline{X}$ a 
{\em tropical compactification} of $X$, and $\CX_k$ a {\em tropical degeneration} of $X$.

The combinatorics of the special fiber of a tropical degeneration of $X$ is closely
related to the combinatorics of $\trop(X).$  In particular if $(X,\PP)$ is a tropical
pair, and $\CX$ is the corresponding tropical degeneration, then a polyhedron $P$
of $\Sigma$ corresponds to the closure of a torus orbit in the special fiber of $\PP$.  
Call this torus orbit closure $\PP_P$.  Then the intersection $\CX_P$ of $\CX$ with $\PP_P$ 
is a closed subscheme of $\CX_k$.  Moreover, if $P$ and $P^{\prime}$ are
polyhedra of $\Sigma$, and $Q$ is the smallest polyhedron in $\Sigma$
containing both $P$ and $P^{\prime}$, then the intersection of $\CX_P$ and $\CX_{P^{\prime}}$
is $\CX_Q$.

Let $U_P$ be the open torus orbit corresponding to $P$.
Fix a point $p$ in $U_P$, and consider the fiber over $p$
of the multiplication map
$$m: \CT \times_{\OO} \CX \rightarrow \PP.$$
On the one hand, $m^{-1}(p)$ is nonempty of dimension equal to the dimension of $X$,
since $m$ is flat and surjective.  On the other hand, by projection onto $\CX$, $m^{-1}(p)$ 
is isomorphic to the product $\CT_P\times (\CX \cap U_P)$, where $\CT_P$ is the subgroup 
of $\CT_0$ that acts trivially on $U_P$.  Since $(\CX \cap U_P)$ is dense in $\CX_P$ we find that
$\CX_P$ is nonempty of dimension equal to the dimension of $X$ minus the dimension of $P$.

On the other hand, let $w$ be a point in the relative interior of $P$.  Then $w$ corresponds
to a cocharacter of $T$, and $w(\pi)$ specializes to a point $p$ in $U_P$.   Projection
onto $\CT$ identifies $m^{-1}(p)$ with the mod $\pi$ reduction $\ini_w X$ of $w(\pi)X$.
(More formally, $\ini_w X$ can be defined as the special fiber of the closure in $\CT$
of the subscheme $w(\pi)X$ of $T$.  Note that this depends only on $X$ and $T$, not on our choice of
$\Sigma$.) In particular we have 
$$\ini_w X \cong \CT_P \times (\CX \cap U_P)$$ 
for any $w$ in the relative interior of $P$.  We have thus shown:

\begin{lemma} \label{lemma:comp}
The space $\ini_w X$ is a torus bundle over $\CX \cap U_P$.  In particular, if
$C(\ini_w X)$ is the set of connected components of $(\ini_w X)_{\overline{k}}$,
and $C(\CX \cap U_P)$ is the set of connected components of $(\CX \cap U_P)_{\overline{k}}$,
then the maps
$$\ini_w X \cong m^{-1}(p) \rightarrow \CX \cap U_P$$  
give a natural bijection of $C(\ini_w X)$ with $C(\CX \cap U_P)$.
\end{lemma}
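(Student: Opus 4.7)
The plan is to combine the two complementary descriptions of the fiber $m^{-1}(p)$ that were established in the paragraph preceding the statement. Projecting onto the $\CX$ factor yields $m^{-1}(p) \cong \CT_P \times (\CX \cap U_P)$, since the stabilizer of a point of $U_P$ in $\CT$ is the subtorus $\CT_P$ and $\CX \cap U_P$ is dense in $\CX_P$. Projecting onto the $\CT$ factor identifies $m^{-1}(p)$ with the special fiber $\ini_w X$ of the closure in $\CT$ of $w(\pi)X$. Composing these two identifications produces an isomorphism $\ini_w X \cong \CT_P \times (\CX \cap U_P)$, which already exhibits $\ini_w X$ as a (trivial) $\CT_P$-torsor, hence a torus bundle, over $\CX \cap U_P$. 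This disposes of the first assertion.

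For the statement on connected components, I would base change to $\overline{k}$ and invoke the fact that a split torus over an algebraically closed field is geometrically connected. Since $(\CT_P)_{\overline{k}}$ is connected, the second projection
$$(\CT_P)_{\overline{k}} \times (\CX \cap U_P)_{\overline{k}} \longrightarrow (\CX \cap U_P)_{\overline{k}}$$
induces a bijection on connected components (the preimage of each component of the base is the product of $(\CT_P)_{\overline{k}}$ with that component, which is connected). Composing with the isomorphism $(\ini_w X)_{\overline{k}} \cong (m^{-1}(p))_{\overline{k}} \cong (\CT_P)_{\overline{k}} \times (\CX \cap U_P)_{\overline{k}}$ gives the desired bijection $C(\ini_w X) \leftrightarrow C(\CX \cap U_P)$.

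The main point to verify — and really the only one that is not a formal consequence of what has already been written — is the identification $m^{-1}(p) \cong \CT_P \times (\CX \cap U_P)$ as schemes (not just as sets), so that the projection is an honest morphism whose fibers are $\CT_P$-torsors. This follows because $\CT$ acts on $\PP$ with stabilizer $\CT_P$ along $U_P$, so the preimage under $m$ of a point of $U_P$ is a $\CT_P$-torsor on $\CX$, trivialized by the choice of the section sending $x \in \CX \cap U_P$ to $(e,x) \in \CT \times \CX$. No further obstacles arise; the argument is essentially a bookkeeping exercise that repackages the preceding discussion.
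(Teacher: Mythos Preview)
Your proposal is correct and follows exactly the approach of the paper: the lemma is stated immediately after the discussion establishing $m^{-1}(p) \cong \CT_P \times (\CX \cap U_P)$ via projection to $\CX$ and $m^{-1}(p) \cong \ini_w X$ via projection to $\CT$, with the paper simply writing ``We have thus shown'' before the lemma. Your added remarks on why the product decomposition holds as schemes and why connectedness of $(\CT_P)_{\overline{k}}$ yields the bijection on components flesh out details the paper leaves implicit, but the argument is the same.
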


We will be particularly interested in tropical pairs $(X,\PP)$ where the multiplication
map $m: \CT \times_{\OO} \CX \rightarrow \PP$ is {\em smooth}.  This condition is
due to Tevelev.

\begin{defn} A subvariety $X$ of $\CT$ is {\em sch\"on}
if there exists a tropical pair $(X,\PP)$ such that the multiplication map
$$m: \CT \times_{\OO} \CX \rightarrow \PP$$ is smooth.
\end{defn}

One then has (\cite{LQ}, 6.7):
\begin{prop} If $X$ is sch\"on, then for {\em any} tropical pair
$(X,\PP)$, the multiplication map
$$m: \CT \times_{\OO} \CX \rightarrow \PP$$ 
is smooth.
\end{prop}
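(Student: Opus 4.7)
The plan is to translate smoothness of $m$ into an intrinsic condition on $X$ that does not refer to $\PP$ or $\Sigma$; since the sch\"on hypothesis guarantees this condition holds for one tropical pair, it will then hold for every tropical pair.

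Because $(X,\PP)$ is tropical, $m$ is already faithfully flat, so smoothness of $m$ reduces to smoothness of its fibers. Since $m$ is equivariant for the left $\CT$-action on the first factor of $\CT\times_{\OO}\CX$ and the torus action on $\PP$, and since each torus orbit $U_P\subset\PP$ is a single $\CT$-orbit, smoothness of $m$ along $m^{-1}(U_P)$ can be tested on a single fiber $m^{-1}(p)$ with $p\in U_P$. By Lemma \ref{lemma:comp}, for any such $p$ and any $w$ in the relative interior of $P$, we have isomorphisms
\[ m^{-1}(p) \;\cong\; \CT_P \times (\CX \cap U_P) \;\cong\; \ini_w X, \]
so smoothness of $m$ over $U_P$ is equivalent to smoothness of $\ini_w X$ for some (equivalently every) $w$ in the relative interior of $P$.

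The scheme $\ini_w X$, being the special fiber of the closure in $\CT$ of $w(\pi)X$, depends only on $X$, $\CT$, and $w$; in particular, its smoothness is an intrinsic invariant of the point $w \in \trop(X)$, independent of $\Sigma$. Since $(X,\PP)$ is tropical, $|\Sigma| = \trop(X)$, and every point of $\trop(X)$ lies in the relative interior of a unique polyhedron of $\Sigma$. Running over all $P \in \Sigma$, we conclude that smoothness of $m$ is equivalent to the intrinsic condition $(\star)$: $\ini_w X$ is smooth for every $w \in \trop(X)$. A parallel argument using the recession fan $\Sigma_0$ handles the torus orbits in the generic fiber $\PP_K$, including smoothness of $X$ itself as the case of the open orbit.

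Since $(\star)$ refers only to $X$ and $\CT$, it either holds for all tropical pairs or none. The sch\"on hypothesis asserts it holds for some tropical pair $(X,\PP_0)$, and therefore for the given tropical pair $(X,\PP)$ as well, so $m$ is smooth. The main point requiring care is verifying that $\CT$-equivariance, combined with the faithful flatness of $m$ built into the definition of tropical, genuinely suffices to deduce smoothness of $m$ on all of $m^{-1}(U_P)$ from smoothness of one fiber; beyond this, the proof is a translation, via Lemma \ref{lemma:comp}, between the torus-orbit stratification of $\PP$ and the intrinsic language of initial degenerations of $X$.
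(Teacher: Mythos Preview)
The paper does not supply its own proof of this proposition; it simply cites \cite{LQ}, 6.7. However, your argument is essentially the one the paper deploys immediately afterward in proving Proposition~\ref{prop:schon}: reduce smoothness of the faithfully flat map $m$ to smoothness of its fibers, identify each fiber with an initial degeneration $\ini_w X$ via the discussion preceding Lemma~\ref{lemma:comp}, and observe that $\ini_w X$ is intrinsic to $X$ and $w$, independent of $\Sigma$. So your approach matches the paper's reasoning and is correct.

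One small remark: your appeal to $\CT$-equivariance to pass from one fiber in $U_P$ to all of them is fine, but it is not really needed---the paper's computation already shows that every fiber over $U_P$ is isomorphic to $\CT_P\times(\CX\cap U_P)$, so the isomorphism type is constant along the orbit without invoking equivariance. Your acknowledgment that the generic-fiber orbits (governed by $\Sigma_0$) require a parallel check is appropriate; the paper is equally brief on this point.
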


Note that if $X$ is sch\"on then it is smooth (consider the preimage of the identity in T
under the multiplication map.)  In fact, we have:

\begin{prop} \label{prop:schon} The following are equivalent:
\begin{enumerate}
\item $X$ is sch\"on.
\item $\ini_w X$ is smooth for all $w \in \trop(X)$.
\item For any tropical pair $(X,\PP)$, and any polyhedron $P$ in $\Sigma$,
$\CX \cap U_P$ is smooth.
\end{enumerate}
\end{prop}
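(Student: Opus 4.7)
My plan is to establish the cyclic implications $(1) \Rightarrow (2) \Rightarrow (3) \Rightarrow (1)$, using Lemma~\ref{lemma:comp} as the essential bridge between the initial degeneration $\ini_w X$ and the stratum $\CX \cap U_P$. The underlying principle throughout is the standard fact that a finite-type morphism is smooth if and only if it is flat with smooth geometric fibers.

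For $(1) \Rightarrow (2)$, assume $X$ is sch\"on, so that by the preceding proposition the multiplication map $m\colon \CT \times_{\OO} \CX \to \PP$ is smooth for every tropical pair $(X,\PP)$. Given $w \in \trop(X)$, I first refine $\Sigma$ so that $w$ lies in the relative interior of some polyhedron $P$; this is legitimate because refinements of tropical pairs remain tropical. The identification $m^{-1}(p) \cong \ini_w X$ recorded in the paragraph preceding Lemma~\ref{lemma:comp} then exhibits $\ini_w X$ as a fiber of the smooth morphism $m$, hence it is smooth.

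For $(2) \Rightarrow (3)$, fix any tropical pair $(X,\PP)$ and any $P \in \Sigma$. Since $\supp\Sigma = \trop(X)$, the relative interior of $P$ contains a point $w \in \trop(X)$. By Lemma~\ref{lemma:comp} we have $\ini_w X \cong \CT_P \times_k (\CX \cap U_P)$. Given that $\CT_P$ is a smooth torus and $\ini_w X$ is smooth by hypothesis, the factor $\CX \cap U_P$ must also be smooth, as smoothness of a product forces smoothness of each nonempty factor: at any geometric point $(t,x)$ one has $T_{(t,x)} = T_t \CT_P \oplus T_x(\CX \cap U_P)$, so comparing with $\dim_{(t,x)}(\ini_w X) = \dim \CT_P + \dim_x(\CX \cap U_P)$ forces $\dim T_x = \dim_x$.

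For $(3) \Rightarrow (1)$, I choose any tropical pair $(X,\PP)$, whose existence is guaranteed by the earlier proposition. The map $m$ is flat by the very definition of ``tropical''. Each geometric fiber of $m$ has the form $\CT_P \times_k (\CX \cap U_P)$ for the polyhedron $P$ indexing the torus orbit containing the image point, and is therefore smooth by $(3)$ together with smoothness of $\CT_P$. Flatness combined with smooth geometric fibers forces $m$ to be smooth, so $X$ is sch\"on. The main subtlety is administrative rather than conceptual: one must arrange in the $(1) \Rightarrow (2)$ step that each $w \in \trop(X)$ sits in the relative interior of a polyhedron of some tropical pair, which is handled by refining $\Sigma$. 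All other passages reduce to Lemma~\ref{lemma:comp} plus standard properties of smooth morphisms.
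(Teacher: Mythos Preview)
Your proof is correct and follows essentially the same route as the paper: both hinge on the product decomposition $\ini_w X \cong \CT_P \times (\CX \cap U_P)$ from Lemma~\ref{lemma:comp} and on the criterion ``flat with smooth fibers implies smooth''. The only cosmetic difference is that the paper proves $(2)\Leftrightarrow(3)$ and $(1)\Leftrightarrow(2)$ directly rather than cycling, and your refinement step in $(1)\Rightarrow(2)$ is unnecessary since every $w\in\supp\Sigma$ already lies in the relative interior of a unique polyhedron of $\Sigma$.
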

\begin{proof}
Statements 2) and 3) are clearly equivalent since we have seen that $\ini_w X$ is the product
of a torus with $\CX \cap U_P$, where $P$ is the polyhedron in $\Sigma$ that contains $w$ in its
relative interior.

As for the equivalence of 1) and 2), fix a tropical pair $(X,\PP)$.  We have seen that the 
fibers of the multiplication map
$$m: \CT \times_{\OO} \CX \rightarrow \PP$$ 
are isomorphic to $\ini_w X$ as $w$ ranges over $\trop(X)$.  So 1) implies 2) is clear.
For the converse, note that since $m$ is faithfully
flat, to show it is smooth it suffices to show that it has smooth fibers.  
\end{proof}

It is easy to construct tropical degenerations of sch\"on varieties $X$ in which the special
fiber is a divisor with simple normal crossings.  In particular we have:

\begin{prop}[c.f.~\cite{Hacking}, proof of 2.4] \label{prop:degeneration}
Let $X$ be sch\"on.  There exists an integer $d$ and a tropical pair $(X,\PP)$ over 
$\OO[\pi^{\frac{1}{d}}]$ such that
$\overline{X}$ is smooth over $K[\pi^{\frac{1}{d}}]$, and $\CX_k$ is a divisor in $\CX$ with 
simple normal crossings.  
\end{prop}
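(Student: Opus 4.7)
The plan is to combine the existence of a tropical pair for $X$ (the first proposition of this section) with the unimodular subdivision construction of Proposition \ref{prop:nc}, and then to deduce the smoothness and simple-normal-crossings properties from the schön hypothesis via the smoothness of the multiplication map $m$. I start with any tropical pair $(X,\PP_0)$, with associated completable polyhedral complex $\Sigma_0$; by the proposition identifying $\supp \Sigma$ with $\trop(X)$, one has $|\Sigma_0|=\trop(X)$. After embedding $\Sigma_0$ as a subcomplex of a complete rational complex in $\RR^n$, I apply Proposition \ref{prop:nc} to obtain an integer $d$ and a unimodular simplicial subdivision whose restriction to $d|\Sigma_0|$ is a subdivision $\Sigma'$ of $d\Sigma_0$. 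Setting $\OO':=\OO[\pi^{1/d}]$, the toric scheme $\PP':=X(\tSigma')$ over $\OO'$ is then smooth and its special fiber is a divisor with simple normal crossings.

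Next I verify that $(X,\PP')$ is still tropical. Base changing the original pair to $\OO'$ corresponds to rescaling $\Sigma_0$ by $d$ (as noted after Proposition \ref{prop:nc}) and preserves tropicality; the further refinement $\Sigma'$ of $d\Sigma_0$ keeps tropicality by the refinement-stability proposition of Section 3. Since $X$ is schön, Proposition \ref{prop:schon} then guarantees that the multiplication map
$$m:\CT\times_{\OO'}\CX\longrightarrow\PP'$$
is smooth, where $\CX$ denotes the closure of $X$ in $\PP'$.

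The two conclusions follow by smooth pullback. Restricting $m$ to generic fibers yields a smooth morphism $m_K:T\times\overline{X}\to\PP'_K$ whose target is a smooth toric variety (the recession fan $\Sigma'_0$ is simplicial and unimodular), so $T\times\overline{X}$ is smooth over $K':=K[\pi^{1/d}]$, and hence so is $\overline{X}$. For the special fiber, the $\CT$-action preserves fibers over $\Spec \OO'$, so $m^{-1}(\PP'_k)=\CT\times\CX_k$. Smoothness of $m$ combined with the simple-normal-crossings property of $\PP'_k\subset\PP'$ yields that $\CT\times\CX_k$ is a simple normal crossings divisor in $\CT\times\CX$. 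Finally, since ``simple normal crossings'' is an étale-local property, the smooth surjective projection $\CT\times\CX\to\CX$ lets me descend this to the statement that $\CX_k\subset\CX$ is a simple normal crossings divisor.

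The main technical point that needs care is this last descent step, where I invoke that the simple-normal-crossings condition is both preserved under smooth pullback and detected by smooth surjective morphisms; all the remaining work is essentially bookkeeping, packaging Proposition \ref{prop:nc}, the refinement stability of tropical pairs, and the schön equivalence Proposition \ref{prop:schon}.
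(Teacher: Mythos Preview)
Your proof is correct and follows essentially the same approach as the paper: start from any tropical pair, apply Proposition~\ref{prop:nc} to refine the polyhedral complex (after a ramified base change) so that the ambient toric scheme is smooth with simple normal crossings special fiber, use refinement-stability of tropical pairs and the sch\"on hypothesis to get a smooth multiplication map, and then pull back the smoothness and simple-normal-crossings properties along $m$. You are in fact slightly more careful than the paper on two points---completing $\Sigma_0$ before invoking Proposition~\ref{prop:nc}, and spelling out the descent of the simple-normal-crossings condition along the smooth surjection $\CT\times\CX\to\CX$---but these are elaborations of the same argument rather than a different route.
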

\begin{proof}
Let $(X,\PP)$ be any tropical pair over $\OO$, and let $\Sigma$ be the rational polyhedral
complex corresponding to $\PP$.  By Proposition~\ref{prop:nc}, we can
find a refinement $\Sigma^{\prime}$ of $\Sigma$ and an integer $d$ such that the corresponding toric 
scheme $\PP^{\prime}$ (viewed over $\OO[\pi^{\frac{1}{d}}]$)
has smooth general fiber, and special fiber a divisor with simple normal crossings.

Then $(X,\PP^{\prime})$ is also tropical, and the multiplication map
$$m: \CT \times_{\OO} \CX^{\prime} \rightarrow \PP^{\prime}$$
is smooth by the previous proposition.  Since the special fiber of $\PP^{\prime}$ is a divisor
with simple normal crossings, so is the special fiber of $\CT \times_{\OO} \CX^{\prime}$.
Hence the special fiber of $\CX^{\prime}$ is a divisor with simple normal crossings as well.
Similarly, the general fiber of $\CX^{\prime}$ is smooth because the general fiber of
$\PP^{\prime}$ is smooth.
\end{proof}

\begin{defn} We call a pair $(X,\PP)$ of the sort produced by Proposition~\ref{prop:degeneration}
a {\em normal crossings pair}.  If $(X,\PP)$ is a normal crossings pair, and $\Sigma$
is the polyhedral decomposition of $\trop(X)$ corresponding to $\PP$, we say that
$\Sigma$ is a {\em normal crossings decomposition} of $\trop(X)$.
\end{defn}

\begin{rem} \rm In much of what follows, we will often need to attach a normal
crossings pair to a sch\"on variety $X$ over $\OO$.  To do this we may need to replace
$\OO$ with a ramified extension $\OO[\pi^{\frac{1}{d}}]$; this is harmless and we often
do so without comment.
\end{rem}

\section{Parameterized Tropical Varieties} \label{sec:param}

In this section, given a sch\"on subvariety $X$ of $\CT$, we construct a
natural parameterization of $\trop(X)$ by a topological space $\Gamma_X$.  This
parameterization is functorial in a sense we make precise below.  Moreover, we
will see in the next section that the space $\Gamma_X$ encodes more precise information
about the cohomology of $X$ than $\trop(X)$ does.  Our approach generalizes a construction
of Speyer (\cite{Sp3}, proof of Theorem 10.8) when $X$ has dimension $1$.

Suppose we have a normal crossings pair $(X,\PP)$ so $\supp(\Sigma)=\trop(X)$.  We associate to $(X,\PP)$
a polyhedral complex $\Gamma_{(X,\PP)}$ as follows: its $k$-cells are pairs
$(P,Y)$, where $P$ is a $k$-dimensional polyhedron in $\Sigma$ and $Y$ is an irreducible component of
$\CX_P$.  The cells on the boundary of $(P,Y)$ are the cells of the form $(P_i,Y_i)$,
where $P_i$ is a facet of $P$ and $Y_i$ is the irreducible
component of $\CX_{P_i}$ containing $Y$ (there is exactly one such irreducible component
because $\CX_{P_i}$ is smooth, so its irreducible components do not meet).  The complex 
$\Gamma_{(X,\PP)}$ maps naturally to $\Sigma$ by sending $(P,Y)$ to $P$.

\begin{prop} \label{prop:independence}
The underlying topological space of $\Gamma_{(X,\PP)}$ depends only on $X$.
\end{prop}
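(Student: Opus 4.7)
The plan is to compare two normal crossings pairs via a common refinement. Given normal crossings pairs $(X,\PP_1)$ and $(X,\PP_2)$ with polyhedral decompositions $\Sigma_1,\Sigma_2$ of $\trop(X)$, the common refinement $\Sigma_1\wedge\Sigma_2$ (whose cells are the intersections $P_1\cap P_2$) refines both, and by Proposition~\ref{prop:nc} we can further subdivide to a normal crossings decomposition $\Sigma$ refining each $\Sigma_i$ (possibly after a ramified base change of $\OO$, which is harmless for the underlying topological space). Thus it suffices to prove: if $\Sigma'$ refines $\Sigma$ and both correspond to normal crossings pairs $(X,\PP')$ and $(X,\PP)$, then $|\Gamma_{(X,\PP')}|$ and $|\Gamma_{(X,\PP)}|$ are canonically homeomorphic via a subdivision map.

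I would build a cellular map $\pi\colon\Gamma_{(X,\PP')}\to\Gamma_{(X,\PP)}$ as follows. For a cell $(P',Y')$ of $\Gamma_{(X,\PP')}$, let $Q\in\Sigma$ be the unique minimal cell whose relative interior contains that of $P'$. For any $w$ in the relative interior of $P'$ (hence also of $Q$), Lemma~\ref{lemma:comp} identifies the connected components of $(\ini_w X)_{\overline{k}}$ with those of both $\CX'\cap U'_{P'}$ and $\CX\cap U_Q$; composing gives a natural bijection. Define $\pi(P',Y')=(Q,Y)$, where $Y$ is the component of $\CX_Q$ corresponding to $Y'$ under this bijection. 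Schönness (via Proposition~\ref{prop:schon}) ensures these strata are smooth, so connected and irreducible components coincide, and by density they match the irreducible components of $\CX'_{P'}$ and $\CX_Q$ that appear in the definition of the parameterizing complexes.

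To verify that $\pi$ is a subdivision, fix $(P,Y)\in\Gamma_{(X,\PP)}$ and examine its preimage. The polyhedra $P'\in\Sigma'$ with $P'\subseteq P$ tile $P$ because $\Sigma'$ refines $\Sigma$; for each such $P'$, the cell $Q(P')$ is a face of $P$ (or $P$ itself), and the unique component of $\CX_{Q(P')}$ containing $Y$ picks out, via the bijection above, a unique component $Y'$ of $\CX'_{P'}$. These pairs $(P',Y')$ glue into a polyhedral subcomplex of $\Gamma_{(X,\PP')}$ that maps homeomorphically onto $(P,Y)$, inducing the desired homeomorphism of geometric realizations. The main obstacle is checking that this cell assignment is compatible with face inclusions $P''\subseteq P'$ inside $P$: the chosen components on $P''$ and on $P'$ must agree under the face map of $\Gamma_{(X,\PP')}$. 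This reduces to the naturality of the bijection in Lemma~\ref{lemma:comp} as $w$ passes between the relative interiors of adjacent cells, together with the uniqueness of the smooth irreducible component of $\CX_{Q(P'')}$ containing a given component of $\CX_{Q(P')}$.
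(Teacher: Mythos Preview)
Your proposal is correct and follows essentially the same strategy as the paper: reduce to the case of a refinement, build a comparison map $\Gamma_{(X,\PP')}\to\Gamma_{(X,\PP)}$, and use Lemma~\ref{lemma:comp} to identify the fibers over each $w\in\trop(X)$ with $C(\ini_w X)$. The only packaging difference is that the paper constructs the map directly from the toric morphism $\PP'\to\PP$ (sending $Y'$ to the component of $\CX_P$ containing its image) and then invokes Lemma~\ref{lemma:comp} to conclude bijectivity, whereas you route the definition of the map itself through Lemma~\ref{lemma:comp} and then verify the subdivision structure cell by cell; these are equivalent arguments.
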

\begin{proof}
Any two polyhedral decompositions of $\trop(X)$ have a common refinement; we can further
refine this to be a normal crossings decomposition of $\trop(X)$.  It thus suffices to show that if 
$\Sigma$ and $\Sigma^{\prime}$ are normal crossings decompositions of $\trop(X)$, with associated
normal crossings pairs $(X,\PP)$ and $(X,\PP^{\prime})$, and
$\Sigma^{\prime}$ refines $\Sigma$, then the underlying topological spaces of
$\Gamma_{(X,\PP)}$ and $\Gamma_{(X,\PP^{\prime})}$ are homeomorphic.

Since $\Sigma^{\prime}$ is a refinement of $\Sigma$, we have a map $\PP^{\prime} \rightarrow \PP$.
Let $\CX^{\prime}$ be the degeneration corresponding to the pair $(X,\PP^{\prime})$.
If $P$ is a polyhedron of $\Sigma$, and $P^{\prime}$ is a polyhedron of $\Sigma^{\prime}$
contained in $P$, then this map induces a map of $\CX^{\prime}_{P^{\prime}}$ to $\CX_P$.
In particular, for every pair $(P^{\prime},Y^{\prime})$ of $\Gamma_{X,\PP^{\prime}}$,
the image of $Y^{\prime}$ in $\CX$ is contained in an unique irreducible component $Y$ of
$\CX_P$.  The map taking $(P^{\prime},Y^{\prime})$ to $(P,Y)$ is then a map of polyhedral
complexes $\Gamma_{(X,\PP^{\prime})} \rightarrow \Gamma_{(X,\PP)}.$

We have a commutative diagram:
\begin{diagram}
\Gamma_{(X,\PP^{\prime})} & \rTo & \trop(X) \\
\dTo & \ruTo & \\
\Gamma_{(X,\PP)} & & 
\end{diagram}
The fiber of $\pi: \Gamma_{(X,\PP)}\rightarrow \trop(X)$ over a point $w$ is in canonical
bijection with the set $C(\CX_P)$ of (geometric) connected components of $\CX_P$.  Similarly, the fiber
of $\pi^{\prime}: \Gamma_{(X,\PP^{\prime})} \rightarrow \trop(X)$ over $w$ is in canonical bijection
with $C(\CX^{\prime}_{P^{\prime}})$.  By Lemma~\ref{lemma:comp}, both of these sets of connected components
are in bijection with $C(\ini_w X)$; in fact, we have a commutative diagram:
\begin{diagram}
C(\ini_w X) & \rTo & (\pi^{\prime})^{-1}(w)\\ 
\dEq & & \dTo\\
C(\ini_w X) & \rTo & \pi^{-1}(w)
\end{diagram}
Thus the map $\Gamma_{(X,\PP^{\prime})} \rightarrow \Gamma_{(X,\PP)}$
is bijective, and is therefore a homeomorphism on the underlying topological
spaces of $\Gamma_{(X,\PP^{\prime})}$ and $\Gamma_{(X,\PP)}$.
\end{proof}

In light of this proposition, we denote by $\Gamma_X$ the underlying topological
space of $\Gamma_{(X,\PP)}$ for {\em any} normal crossings pair $(X,\PP)$.  
We think of $\Gamma_X$, together
with its natural map to $\trop(X)$, as a ``parameterized tropical variety.''
Note that $\Gamma_X$ inherits an integral affine structure by pullback from
$\trop(X)$; more precisely, for any normal crossings pair $(X,\PP)$,
the map $\Gamma_X \rightarrow \trop(X)$ is
linear on any polyhedron in $\Gamma_{(X,\PP)}$.

Note that for any $w \in \trop(X)$, the number of preimages
of $w$ in $\Gamma_X$ is equal to the number of connected components of
$\ini_w X$.  Therefore, if $\Sigma$ is a normal crossings decomposition 
of $\trop(X)$, and $w$ is in the relative interior
of a top dimensional cell $P$ of $\Sigma$, then the number of preimages of
$w$ is equal to the {\em multiplicity} of $P$ in $\trop(X)$.  This suggests
that we should give $\Gamma_X$ the structure of a weighted polyhedral complex
by giving every polyhedron on $\Gamma_X$ weight one.  

If we do this, then $\Gamma_X$ satisfies a ``balancing condition'' analogous
to the well-known balancing condition on $\trop(X)$.  Fix a normal
crossings decomposition $\Sigma$ of $\trop(X)$, with corresponding normal
crossings pair $(X,\PP)$.  Consider a polyhedron $(P,Y)$ of $\Gamma_{(X,\PP)}$
of dimension $\dim X - 1$, and let $\{(P_i,Y_i)\}$ be the top dimensional
polyhedra of $\Gamma_{(X,\PP)}$ containing $(P,Y)$.

Fix a point $w$ with rational coordinates in the relative interior of $P$, and let $V_P$
be the linear span, $\Span(P - w)$.  Similarly, for each $P_i$,
let $V_i$ be the positive span of $\Span^+(P_i - w)$.
Then $V_i/V_P$ is a ray in $\RR^n/V_P$; this collection of rays is the fan attached to
the toric variety $\PP_P$.  Let $v_i$ be the smallest integer vector along the ray $V_i/V_P$.

\begin{prop} \label{prop:balancing}
The $v_i$'s satisfy the ``balancing property'':
$$\sum_{(P_i,Y_i)} v_i = 0.$$
\end{prop}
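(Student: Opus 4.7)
The plan is to use the fact that the codimension-one cell $(P,Y)$ corresponds to a smooth proper curve $Y$ sitting inside the smooth toric variety $\PP_P$, so that the balancing falls out from the classical identity that principal divisors on a smooth proper curve have degree zero, applied to the restrictions of torus characters.

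First the setup. Since $(X,\PP)$ is a normal crossings pair and $\dim P = \dim X - 1$, Proposition~\ref{prop:schon} makes the stratum $\CX_P$ smooth of dimension $1$, and properness of $\CX$ over $\OO$ makes it proper over $k$, so each irreducible component $Y$ is a smooth proper curve. The toric variety $\PP_P$ is the closure of $U_P$ in the smooth toric scheme $\PP$, hence smooth; its fan lies in $\RR^n/V_P$, and since $\Sigma$ has pure dimension $\dim X$, the only polyhedra of $\Sigma$ containing $P$ are $P$ itself and the top-dimensional $P_i$. So the fan of $\PP_P$ consists precisely of the rays $V_i/V_P$ with primitive generators $v_i$; the corresponding torus-invariant Weil divisor is the orbit closure $\PP_{P_i}$, which I abbreviate $D_i$.

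Next I would identify the $Y_j$'s with intersection points. The inclusion $\PP_{P_i} \subset \PP_P$ yields $\CX_{P_i} = \CX_P \cap D_i$, and since $\CX_k$ has simple normal crossings, the divisors $\{\CX_P \cap D_i\}$ on the smooth curve $\CX_P$ are reduced and meet each component $Y$ transversely. By the very definition of the face relation on $\Gamma_{(X,\PP)}$, the closed points of $Y \cap D_i$ are exactly the $Y_j$'s with $P_j = P_i$, giving the intersection number $Y \cdot D_i = \#\{\,j : P_j = P_i\,\}$.

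Finally I would pick any character $\chi$ of the torus of $\PP_P$, i.e.\ any element of $\Hom(\ZZ^n/(\ZZ^n \cap V_P), \ZZ)$, and view $\chi$ as a rational function on $\PP_P$. Its restriction to $Y$ is nonzero since $Y \cap U_P$ is dense in $Y$ (as $Y$ is a component of the closure of $\CX \cap U_P$) and $\chi$ is invertible on $U_P$. The toric identity $\dv(\chi) = \sum_i \langle \chi, v_i\rangle D_i$ together with the vanishing of the degree of a principal divisor on the smooth proper curve $Y$ gives
$$0 \;=\; \deg \dv(\chi|_Y) \;=\; \sum_i \langle \chi, v_i \rangle\,(Y \cdot D_i) \;=\; \Big\langle \chi,\; \sum_{(P_j, Y_j)} v_j \Big\rangle.$$
Since this holds for every $\chi$, the sum $\sum_j v_j$ vanishes, which is the asserted balancing.

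Conceptually the argument is brief, so the main obstacle is not depth but bookkeeping: one must verify that $\CX_P$ is smooth and proper (from the normal crossings pair hypothesis and Proposition~\ref{prop:schon}), that the intersection $Y \cap D_i$ is transverse so each $Y_j$ contributes multiplicity one (from the simple normal crossings structure guaranteed by Proposition~\ref{prop:degeneration}), and that the face relation on $\Gamma_{(X,\PP)}$ correctly matches the $Y_j$'s in the statement with the closed points of $Y \cap D_i$. Once those identifications are in place, the balancing reduces to the classical degree-zero principle for principal divisors on a proper smooth curve.
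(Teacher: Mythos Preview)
Your proposal is correct and follows essentially the same approach as the paper: restrict a torus character of $\PP_P$ to the smooth proper curve $Y$, compute its divisor using the toric identity $\ord_{D_i}(\chi) = \langle \chi, v_i\rangle$ together with the transversality of $Y$ with the boundary divisors $\PP_{P_i}$, and conclude by the degree-zero property of principal divisors. If anything, your write-up is slightly more careful than the paper's own proof—you explicitly restrict to the component $Y$ rather than to all of $\CX_P$, and you verify that $\chi|_Y$ is nonzero.
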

\begin{proof}
Torus-equivariant rational functions on $\PP_P$ correspond to lattice vectors $u$ in
the space $(\RR^n/V_P)^*$ dual to $\RR^n/V_P$.  The valuation of $u$ along the divisor
of $\PP_P$ corresponding to $v_i$ is simply $u(v_i)$.

Now restrict $u$ to the curve $\CX_P$.  For any polyhedron $P^{\prime}$ of $\Sigma$ containing
$P$, $\CX_P$ intersects the boundary divisior $\PP_{P^{\prime}}$ in one point for each
cell $(P_i,Y_i)$ of $\Gamma_{(X,\PP)}$ with $P_i = P^{\prime}$.
The divisor of $u$ is therefore equal to
$$\sum_{(P_i,Y_i)} u(v_i) Y_i,$$
as $\CX_P$ intersects each boundary divisor $\PP_{P_i}$ transversely.
This divisor is a principal divisor and thus has degree zero.
\end{proof}

We have thus attached to any sch\"on subvariety $X$ of $\CT$, a canonical, multiplicity
free parameterization by the topological space $\Gamma_X$.  Moreover, this construction is 
functorial:  let $\CT$ and $\CT^{\prime}$ be tori over $\OO$, and let
$T$ and $T^{\prime}$ be their general fibers.  Suppose we have sch\"on subvarieties
$X$ and $Y$ of $T$ and $T^{\prime}$, respectively, and a homomorphism of tori
$T \rightarrow T^{\prime}$ that takes $X$ to $Y$.  We then have a natural map
$f:\trop(X) \rightarrow \trop(Y)$.

\begin{prop} \label{prop:functoriality}
There is a natural map
$\Gamma_X \rightarrow \Gamma_Y$ that makes the diagram
\begin{diagram}
\Gamma_X & \rTo & \Gamma_Y\\
\dTo & & \dTo\\
\trop(X) & \rTo & \trop(Y)
\end{diagram}
commute.
\end{prop}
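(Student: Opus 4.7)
The plan is to construct the map by choosing compatible normal crossings pairs for $X$ and $Y$, defining a map of cells via the induced toric morphism, and then invoking Proposition~\ref{prop:independence}. First I would fix a normal crossings pair $(Y,\PP')$ with polyhedral decomposition $\Sigma'$ of $\trop(Y)$; the torus homomorphism $\phi\colon T\to T'$ corresponds to a $\ZZ$-linear map $f\colon\RR^n\to\RR^{n'}$ of cocharacter lattices. A common refinement of any polyhedral decomposition of $\trop(X)$ with the pullback of $\Sigma'$ along $f$ yields a polyhedral decomposition of $\trop(X)$ each of whose cells maps into a single cell of $\Sigma'$. Applying Proposition~\ref{prop:nc} (after a further ramified base change of $\OO$ if necessary), I would refine further to a normal crossings decomposition $\Sigma$ of $\trop(X)$, with associated normal crossings pair $(X,\PP)$. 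By construction the extension of $f$ to $\RR^n\times\RR_{\geq 0}$ by the identity on the second factor carries every cone of $\tSigma$ into a cone of $\tSigma'$, so it induces a morphism of toric schemes $\Phi\colon\PP\to\PP'$ extending $\phi$; since $\CX$ and $\CX'$ are defined as closures of $X$ and $Y$, we have $\Phi(\CX)\subseteq\CX'$.

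With this in hand, I would define the cell map as follows. For a cell $(P,Z)$ of $\Gamma_{(X,\PP)}$, let $P'$ be the unique polyhedron of $\Sigma'$ whose relative interior contains $f(\mathrm{relint}(P))$. Under $\Phi$, the orbit closure $\PP_P$ is sent into $\PP'_{P'}$, hence $\Phi(\CX_P)\subseteq\CX'_{P'}$. Because $(Y,\PP')$ is a normal crossings pair, $\CX'_{P'}$ is smooth, so its irreducible components are disjoint, and the image of the irreducible subvariety $Z$ therefore lies in a unique irreducible component $Z'$. Setting $(P,Z)\mapsto(P',Z')$ then produces a morphism of polyhedral complexes $\Gamma_{(X,\PP)}\to\Gamma_{(Y,\PP')}$ that is affine-linear on each cell and commutes with the maps to $\trop(X)$ and $\trop(Y)$ by the very choice of $P'$. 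Passing to the underlying topological spaces and using the canonical identifications of Proposition~\ref{prop:independence} yields the desired map $\Gamma_X\to\Gamma_Y$.

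Independence of the construction from the choice of compatible normal crossings pairs can be argued in the spirit of Proposition~\ref{prop:independence}: any two such choices have a common pair-compatible refinement, and the induced cell maps then differ only by the canonical homeomorphisms of that proposition. The main obstacle is the first step, namely producing a normal crossings pair $(X,\PP)$ whose polyhedral decomposition refines $f^{-1}(\Sigma')$ while still satisfying the simple normal crossings condition. This combines Proposition~\ref{prop:nc} with the fact that any refinement of a tropical pair remains tropical, together with a harmless ramified base change to secure integrality (cf.\ the remark following Proposition~\ref{prop:degeneration}). Once these combinatorial pieces are in place, the definition of the cell map and the verification of commutativity are essentially formal.
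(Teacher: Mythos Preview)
Your construction is essentially the same as the paper's: fix a normal crossings decomposition $\Sigma'$ for $Y$, refine a decomposition of $\trop(X)$ so that each cell maps into a cell of $\Sigma'$, use the resulting toric morphism $\PP\to\PP'$ to send each cell $(P,Z)$ to $(P',Z')$ where $Z'$ is the unique component of $\CY_{P'}$ containing the image of $Z$, and check commutativity. The one point where the paper proceeds differently is the independence check: rather than passing to a common pair-compatible refinement as you suggest, the paper identifies the fiber of $\Gamma_X\to\trop(X)$ over $w$ intrinsically with $C(\ini_w X)$ via Lemma~\ref{lemma:comp} and observes that the induced map on fibers is the canonical map $C(\ini_w X)\to C(\ini_{f(w)} Y)$ coming from $\ini_w X\to\ini_{f(w)} Y$, which is manifestly independent of $\Sigma$ and $\Sigma'$; this is a bit cleaner and makes the naturality transparent, but your refinement argument would also go through.
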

\begin{proof}
Let $\Sigma^{\prime}$ be a normal crossings decomposisition of $\trop(Y)$. 
By proposition~\ref{prop:degeneration}
we can find a normal crossings decomposition $\Sigma$ of $\trop(X)$ 
such that the image of any cell of $\Sigma$ under the map $f$ 
is contained in a cell of $\Sigma^{\prime}$.
Let $(X,\PP)$ and $(Y,\PP^{\prime})$ be the tropical pairs corresponding to
$\Sigma$ and $\Sigma^{\prime}$, and let $\CX$ and $\CY$ denote the associated
tropical degenerations.  Since each cell of $\Sigma$ maps into a cell of
$\Sigma^{\prime}$, we obtain a map from $\CX$ to $\CY$ extending the map
$X \rightarrow Y$.

Now let $P$ be a polyhedron in $\Sigma$, and $P^{\prime}$ be the polyhedron
of $\Sigma^{\prime}$ containing the image of $P$.  Then our map
$\CX \rightarrow \CY$ induces a map $\CX_P \rightarrow \CY_{P^{\prime}}$.

If $(P,X_i)$ is a polyhedron of $\Gamma_{(X,\PP)}$, then by definition
$X_i$ is a connected component of $\CX_P$.  The image of $X_i$
in $\CY_{P^{\prime}}$ is contained in a connected component $Y_i$ of
$\CY_{P^{\prime}}$.  We can thus construct a map of polyhedral complexes
$$\Gamma_{(X,\PP)} \rightarrow \Gamma_{(Y,\PP^{\prime})}$$
that maps $(P,X_i)$ to $(P^{\prime},Y_i)$ by the map $P \rightarrow P^{\prime}$.
The induced map $\Gamma_X \rightarrow \Gamma_Y$ on underlying topological
spaces is clearly continuous and makes the diagram commute.  

To see that it is independent of choices, let $\pi_X$ and $\pi_Y$
be the projections of $\Gamma_X$ and $\Gamma_Y$ to $\trop(X)$ and $\trop(Y)$
respectively.  We then have canonical bijections between
$\pi_X^{-1}(w)$ and $C(\ini_w X)$, and between $\pi_Y^{-1}(f(w))$ 
and $C(\ini_{f(w)} Y)$.  The map $X \rightarrow Y$ induces a natural map
$\ini_w X \rightarrow \ini_{f(w)} Y$, and the diagram
\begin{diagram}
C(\ini_w X) & \rTo & \pi_X^{-1}(w)\\
\dTo & & \dTo\\
C(\ini_{f(w)} Y) & \rTo & \pi_Y^{-1}(f(w))
\end{diagram}
commutes.  As the left hand side is independent of the choices of $\Sigma$
and $\Sigma^{\prime}$, the result follows.
\end{proof}

\begin{rem} \rm Although Proposition~\ref{prop:functoriality} is stated for
maps $X \rightarrow Y$ that are {\em monomial morphisms} (i.e., that arise from
morphisms of the ambient tori), we can avoid this issue if $X$ and $Y$
are intrinsically embedded.  Recall that $X$ is {\em very affine} if it can
be embedded as a closed subscheme of a torus $T$.  In this case (c.f.
~\cite{Tevelev}, section 3) there is an intrinsic torus $T_X$ associated to $X$
a {\em canonical} embedding of $X$ in $T_X$.  Moreover, if $X$ and $Y$ are
very affine and $f: X \rightarrow Y$ is a morphism, there is a morphism
of tori $T_X \rightarrow T_Y$ that induces $f$.
\end{rem}

We also record, for later use, the following result relating the cohomology of
$\Gamma_X$ to that of $\trop(X)$:

\begin{lemma} \label{lemma:cohomology}
Let $X$ be sch\"on, and let $\Sigma$ be a normal crossings decomposition of $\trop(X)$.
Suppose that for each polyhedron $P$ in $\Sigma$, $\CX_P$ is either connected or
has dimension zero.  Then the natural
map 
$$H^r(\trop(X),\ZZ) \rightarrow H^r(\Gamma_X,\ZZ)$$
is an isomorphism for $0 \leq r < \dim X$, and an injection for $r = \dim X$.
\end{lemma}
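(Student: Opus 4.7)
The plan is to reduce the statement to a purely combinatorial fact about cellular cochain complexes of the polyhedral complexes $\Sigma$ (underlying $\trop(X)$) and $\Gamma_{(X,\PP)}$ (underlying $\Gamma_X$), exploiting that under the connectivity hypothesis the map $\pi\colon\Gamma_X\to\trop(X)$ is a homeomorphism on the $(d{-}1)$-skeleton, where $d=\dim X$.

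First I would unpack the hypothesis. Fix a normal crossings pair $(X,\PP)$ with associated polyhedral decomposition $\Sigma$. By Proposition~\ref{prop:schon}, each stratum $\CX_P$ is smooth; since $\dim\CX_P=d-\dim P$, the hypothesis forces $\CX_P$ to be connected whenever $\dim P<d$. A smooth connected scheme is irreducible, so for each $P$ of dimension $<d$ there is a \emph{unique} cell $(P,\CX_P)$ of $\Gamma_{(X,\PP)}$ lying above $P$. For $P$ of dimension $d$, $\CX_P$ is a finite disjoint union of (reduced) points, and there is one cell $(P,Y)$ for each such point. Thus $\pi$ is a cellular bijection on the $(d{-}1)$-skeleton and a surjection with finite fibers on top-dimensional cells.

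Next I would consider the pullback map on cellular cochains, $\pi^{\ast}\colon C^{\ast}(\Sigma,\ZZ)\to C^{\ast}(\Gamma_{(X,\PP)},\ZZ)$, which on the dual basis sends $e_P^{\ast}$ to $\sum_{Y}e_{(P,Y)}^{\ast}$, the sum running over irreducible components of $\CX_P$. From the previous paragraph, this map is an isomorphism in cochain degrees $<d$ and is injective in degree $d$. To see that $\pi^{\ast}$ commutes with the coboundary one uses the incidence rule defining $\Gamma_{(X,\PP)}$: the boundary of $(P,Y)$ consists of cells $(P_i,Y_i)$ with $P_i$ a facet of $P$ and $Y_i$ the unique component of $\CX_{P_i}$ containing $Y$. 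In the regime where $\dim P_i<d-1$ uniqueness of $Y_i=\CX_{P_i}$ makes both sides of $d\pi^{\ast}=\pi^{\ast}d$ equal to $\sum_{P\succ P_i}\pm e_{(P,\CX_P)}^{\ast}$, and in the boundary case $\dim P_i=d-1$ the components $Y$ of $\CX_P$ (with $\dim P=d$) are unconstrained by the ``$Y\subset\CX_{P_i}$'' condition, so again the two sides agree.

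Now consider the short exact sequence of cochain complexes
\[
0\longrightarrow C^{\ast}(\Sigma,\ZZ)\xrightarrow{\pi^{\ast}} C^{\ast}(\Gamma_{(X,\PP)},\ZZ)\longrightarrow Q^{\bullet}\longrightarrow 0.
\]
By construction $Q^k=0$ for $k\neq d$ and $Q^d=\bigoplus_{\dim P=d}\ZZ^{\,\#\{\text{components of }\CX_P\}-1}$, so $Q^{\bullet}$ has cohomology concentrated in degree $d$. The associated long exact sequence in cohomology then yields isomorphisms $H^r(\trop(X),\ZZ)\risom H^r(\Gamma_X,\ZZ)$ for $r\le d-1$ and an injection in degree $d$, which is the conclusion.

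The one step requiring some care is the verification that $\pi^{\ast}$ is a chain map with the signs coming from a compatible choice of orientations on the cells of $\Gamma_{(X,\PP)}$ and $\Sigma$; this is where the assumption that $\CX_P$ is connected in all intermediate dimensions is actually used, and it is also the step where one must invoke that cellular cohomology of these polyhedral complexes computes their singular cohomology. Everything else is formal once the short exact sequence is in place.
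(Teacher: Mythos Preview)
Your proof is correct and follows essentially the same approach as the paper: both rest on the observation that, under the connectivity hypothesis, the cells of $\Gamma_{(X,\PP)}$ of dimension $<\dim X$ are in bijection with those of $\Sigma$, so $\Gamma_{(X,\PP)}$ is obtained from $\Sigma$ by adding only top-dimensional cells. The paper states this and declares the cohomological conclusion immediate; you have simply made the ``immediate'' step explicit via the short exact sequence of cochain complexes and its long exact sequence, which is a reasonable elaboration of the same idea.
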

\begin{proof}
Let $(X,\PP)$ be the normal crossings pair attached to $\Sigma$, so that
$\Gamma_{(X,\PP)}$ is a triangulation of $\Gamma$.  The polyhedra $P$ in $\Gamma_{(X,\PP)}$
with $\dim P < \dim X$ are, by construction, in bijection with the polyhedra in $\Sigma$  
with $\dim P < \dim X$.  Thus $\Gamma_{(X,\PP)}$ is obtained from $\Sigma$ by adding
additional top-dimensional cells; the result follows immediately.
\end{proof}

\section{Weight filtrations and the weight spectral sequence} \label{sec:weight}

Our goal will be to relate the combinatorial structure of $\Gamma_X$ to geometric
invariants of $X$.  The invariants that appear arise from Deligne's theory of weights, which
we now summarize.  Recall (c.f.~\cite{WeilII}, 1.2) that if $F$ is a finite field of order $q$, a continuous $l$-adic 
representation $\rho$ of $\Gal(F^{\sep}/F)$ has weight $r$ if all the eigenvalues of the geometric 
Frobenius of $F$ are algebraic integers $\alpha$, all of whose Galois conjugates have complex absolute 
value equal to $q^{r/2}$.  If $A$ is a finitely generated $\ZZ$-algebra, an {\'e}tale sheaf 
$\CF$ on $\Spec A$ 
has weight $r$ if for each closed point $s$ of $\Spec A$, the stalk $\CF_s$ has weight $r$ when 
considered as a $\Gal(k(s)^{\sep}/k(s))$-module.

Following Ito (\cite{Ito}, 2.2), we extend this definition to the case where $F$
is a purely inseparable extension of a finitely generated extension of $\FF_p$ or $\QQ$.  For
such $F$, one can find a finitely generated $\ZZ$-subalgebra $A$ of $F$ such that $F$ is a purely
inseparable extension of the field of fractions of $A$.  

In this setting, a representation $\rho$ of $\Gal(F^{\sep}/F)$ has weight $r$ if there is an 
open subset $U$ of $\Spec A$, and a smooth $\CF$ on $U$ of weight $r$, such that $\rho$ arises from $\CF$
by pullback to $\Spec F$.  The Weil conjectures imply that for any proper smooth
variety $X$ over $F$, and any $l$ prime to the characteristic of $F$,
$H^r_{\et}(X_{F^{\sep}},\QQ_l)$ has weight $r$. 

We henceforth assume that the residue field $k$ of $\OO$ is a purely inseparable
extension of a finitely generated extension of $\FF_p$ or $\QQ$.  
We also fix an $l$ prime to the characteristic of $k$.

Let $G$ be the absolute Galois group of the field $K$.  Then $G$ admits a
surjection $G \rightarrow \Gal(k^{\sep}/k)$, whose kernel is the inertia group $I_K$
of $K$.  If $M$ is a $G$-module on which $I$ acts through a finite quotient,
there is a finite index subgroup $H$ in $G$ such that $H \cap I$ acts trivially
on $M$.  Thus $\Gal(k^{\sep}/k^{\prime})$ acts on $M$ for some finite extension
$k^{\prime}$ of $k$.  We say $M$ is pure of weight $r$ if it has weight $r$ as a
$\Gal(k^{\sep}/k^{\prime})$-module.  Note that this is independent of $k^{\prime}$.

The {\'e}tale cohomology of a variety over $K$ with semistable reduction has a filtration by subquotients
which are pure in the above sense.  More precisely, let $\CX$ be a proper scheme over $\OO$, of relative 
dimension $n$,
whose fiber $X_K$ over $\Spec K$ is smooth and whose fiber $\CX_k$ over $\Spec k$ is a divisor with
simple normal crossings.  Then the Rapoport-Zink weight spectral sequence relates the {\'e}tale cohomology 
of $X_{K^{\sep}}$ to the geometry of the special fiber $\CX_k$.  More
precisely, let $\CX_{k^{\sep}}^{(r)}$ denote the disjoint union of $(r+1)$-fold intersections
of irreducible components of $\CX_{k^{\sep}}$; it is smooth of dimension $n-r$ over $k^{\sep}$.
We then have:

\begin{thm}[\cite{RZ}, Satz 2.10; see also~\cite{Ito}]
There is a spectral sequence:
$$E_1^{-r,w+r} = \bigoplus_{s \geq \max(0,-r)} H^{w-r-2s}_{\et}(\CX_{k^{\sep}}^{(2s + r)}, \QQ_l(-r-s))
\Rightarrow H^w_{\et}(X_{K^{\sep}}, \QQ_l).$$
\end{thm}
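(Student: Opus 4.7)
The plan is to build the spectral sequence as the one attached to an appropriate filtered complex on the special fiber, computing the \'etale cohomology of the generic fiber via nearby cycles. First, proper base change (valid because $\CX$ is proper over $\OO$) together with the definition of the nearby cycles functor $R\Psi$ for $\CX \to \Spec \OO$ gives the identification
$$H^w_{\et}(X_{K^{\sep}}, \QQ_l) \;\cong\; H^w\bigl(\CX_{k^{\sep}}, R\Psi \QQ_l\bigr).$$
This reduces matters to computing a hypercohomology on $\CX_{k^{\sep}}$ and finding a natural filtration whose associated graded pieces are supported on the strata $\CX^{(m)}_{k^{\sep}}$.

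Next, exploit strict semistability. Since $\CX_k$ is a reduced simple normal crossings divisor and the generic fiber is smooth, \'etale-locally on $\CX$ the scheme is modelled by $\Spec \OO[x_0, \dots, x_n]/(x_0 \cdots x_r - \pi)$. A Kummer / Picard--Lefschetz computation in this local model shows that $R^q \Psi \QQ_l$ is concentrated on $\CX^{(q)}_{k^{\sep}}$ and identifies there with $(i_q)_* \QQ_l(-q)$, where $i_m \colon \CX^{(m)}_{k^{\sep}} \hookrightarrow \CX_{k^{\sep}}$ is the closed immersion. I would then equip $R\Psi \QQ_l$ with its \emph{monodromy weight filtration} $W_\bullet$: the unique increasing filtration such that the log-monodromy operator $N$ shifts $W$ by $-2$ and the iterated map $N^r \colon \Gr_r^W \to \Gr_{-r}^W$ is an isomorphism for every $r \geq 0$. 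Using the local description one computes
$$\Gr_r^W R\Psi \QQ_l \;\cong\; \bigoplus_{s \geq \max(0,-r)} (i_{2s+r})_*\, \QQ_l(-r-s)\,[-r-2s].$$

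The spectral sequence of the filtered complex $(R\Psi \QQ_l, W_\bullet)$ then has $E_1$-term
$$E_1^{-r, w+r} \;=\; H^w\bigl(\CX_{k^{\sep}}, \Gr_r^W R\Psi \QQ_l\bigr) \;=\; \bigoplus_{s \geq \max(0,-r)} H^{w - r - 2s}_{\et}\bigl(\CX^{(2s+r)}_{k^{\sep}}, \QQ_l(-r-s)\bigr),$$
with $d_1$ a signed sum of Gysin and restriction maps between consecutive strata; convergence to $H^w_{\et}(X_{K^{\sep}}, \QQ_l)$ is inherited from the identification of the first step.

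The main obstacle is identifying the monodromy weight filtration and computing its graded pieces explicitly. This is the technical heart of Rapoport--Zink's Satz 2.10 and requires either the $\ell$-adic analogue of Steenbrink's Hodge-theoretic construction or the log-\'etale machinery of Kato--Nakayama, together with careful bookkeeping of Tate twists and degree shifts. In the present setting, Proposition~\ref{prop:degeneration} (after a possible ramified base change) produces a strictly semistable model to which the Rapoport--Zink result applies directly, so one may simply invoke~\cite{RZ}.
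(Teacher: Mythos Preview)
The paper does not give its own proof of this statement: it is quoted as a theorem of Rapoport--Zink (\cite{RZ}, Satz~2.10) and used as input. Your sketch is the standard route to the weight spectral sequence --- nearby cycles plus the monodromy filtration on $R\Psi\QQ_l$ --- and your final sentence, that one may simply invoke \cite{RZ}, is exactly what the paper does.

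One technical slip worth flagging: the sheaf $R^q\Psi\QQ_l$ is not literally $(i_q)_*\QQ_l(-q)$. At a point where exactly $m+1$ components of $\CX_{k^{\sep}}$ meet, its stalk is $\bigwedge^q$ of the rank-$m$ kernel of the sum map $\QQ_l^{m+1}\to\QQ_l$, Tate-twisted by $(-q)$; the rank therefore jumps along deeper strata, and $R^q\Psi\QQ_l$ is not the pushforward of a constant sheaf from $\CX^{(q)}_{k^{\sep}}$. The clean decomposition into pushforwards from the $\CX^{(m)}_{k^{\sep}}$ only emerges after passing to the graded pieces of the monodromy filtration, as in your displayed formula for $\Gr_r^W R\Psi\QQ_l$. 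This does not affect the overall strategy.
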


Here $\QQ_l(n)$ is the $n$th ``Tate twist'' of the constant sheaf $\QQ_l$; that is, it is the
tensor product of $\QQ_l$ with the $n$th tensor power of the sheaf $\ZZ_l(1)$, where
$\ZZ_l(1)$ is the inverse limit of the sheaves $\mu_{l^n}$ of $l$-power roots of unity.  Note that $\ZZ_l(1)$
is pure of weight $-2$.

The boundary maps of this spectral sequence are completely explicit, and can be described as follows: 
up to sign, they are direct sums of restriction maps
$$H^i_{\et}(Y, \QQ_l(-m)) \rightarrow H^i_{\et}(Y^{\prime}, \QQ_l(-m))$$
where $Y$ is an irreducible component of $\CX_{k^{\sep}}^{(j)}$ and
$Y^{\prime}$ is an irreducible component of $\CX_{k^{\sep}}^{(j+1)}$ contained in $Y$,
or Gysin maps
$$H^i_{\et}(Y^{\prime}, \QQ_l(-m)) \rightarrow H^{i+2}_{\et}(Y, \QQ_l(-m+1))$$
where $Y$ and $Y^{\prime}$ are as above.

More precisely, each term $E_1^{p,q}$ is a direct sum of terms of the form
$H^i_{\et}(Y, \QQ_l(-m))$ for some irreducible component $Y$ of $\CX_{k^{\sep}}^{(j)}$.
If $Y^{\prime}$ is an irreducible component of $\CX_{k^{\sep}}^{(j+1)}$, then we have:
\begin{itemize}
\item Whenever $H^i_{\et}(Y, \QQ_l(-m))$ is a direct summand of $E_1^{p,q}$, and
$H^i_{\et}(Y^{\prime}, \QQ_l(-m))$ is a direct summand of $E_1^{p+1,q}$, then
the corresponding direct summand of the boundary map $E_1^{p,q} \rightarrow E_1^{p+1,q}$
is (up to sign) the restriction
$$H^i_{\et}(Y, \QQ_l(-m)) \rightarrow H^i_{\et}(Y^{\prime}, \QQ_l(-m)).$$
\item Whenever $H^i_{\et}(Y^{\prime}, \QQ_l(-m))$ is a direct summand of $E_1^{p,q}$, and
$H^i_{\et}(Y, \QQ_l(-m+1))$ is a direct summand of $E_1^{p+1,q}$, then
the corresponding direct summand of the boundary map $E_1^{p,q} \rightarrow E_1^{p+1,q}$
is (up to sign) the Gysin map
$$H^i_{\et}(Y^{\prime}, \QQ_l(-m)) \rightarrow H^{i+2}_{\et}(Y, \QQ_l(-m+1)).$$
\end{itemize}

We refer the reader to example \ref{ex:curves} for a description of the weight spectral
sequence in the case when $X$ is a smooth curve.

Note that the term $E_1^{-r,w+r}$ of the weight spectral sequence is pure of weight $w+r$.
As the only map between $\QQ_l$-sheaves that are pure of different weights is the zero map,
this implies that the weight spectral sequence degenerates at $E_2$.  Moreover, the
successive quotients of the filtration on $H^*_{\et}(X_{K^{\sep}}, \QQ_l)$ induced
by the weight spectral sequence are pure.  The filtration arising in this way is called
the {\em weight filtration} on $H^*_{\et}(X_{K^{\sep}}, \QQ_l)$.

We say a $G$-module $M$ is {\em mixed} if $M$ admits an increasing $G$-stable filtration
$$\dots \subset W_rM \subset W_{r+1}M \subset \dots$$
such that $W_rM/W_{r-1}M$ has weight $r$ for all $r$.  (Such a filtration, if it exists, will
be unique.)  We say $M$ is mixed of weights between $r$ and $r^{\prime}$ if $M$ is mixed
and the quotients $W_iM/W_{i-1}M$ are nonzero only when $r \leq i \leq r^{\prime}$.
The above result shows that the cohomology of any scheme over $K$ with semistable reduction
is mixed.  More generally, one has:

\begin{thm} (c.f.~\cite{Ito}, 2.3) Let $X$ be a smooth, proper $n$-dimensional variety over 
$K$.  Then $H^r_{\et}(X_{K^{\sep}}, \QQ_l)$ is mixed of weights between
$\max(0,2r-2n)$ and $\min(2n,2r)$.
\end{thm}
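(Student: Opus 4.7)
The plan is to handle the semistable case directly via the Rapoport--Zink spectral sequence recalled in the excerpt, and reduce the general case to the semistable case via de Jong's theorem on alterations.

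In the semistable case---when $X$ extends to a proper regular $\CX/\OO$ with simple normal crossings special fiber---the RZ spectral sequence abuts to $H^r_{\et}(X_{K^{\sep}}, \QQ_l)$ with $E_1^{-p, r+p}$ pure of weight $r+p$. A contributing summand
$$H^{r-p-2s}_{\et}(\CX_{k^{\sep}}^{(2s+p)}, \QQ_l(-p-s))$$
with $s \geq \max(0,-p)$ is nonzero only when $0 \leq r - p - 2s$ and $r - p - 2s \leq 2(n - 2s - p)$, since $\CX_{k^{\sep}}^{(2s+p)}$ is smooth of dimension $n - 2s - p$. Combining these inequalities and discarding $s$ yields $\max(0, 2r - 2n) \leq r + p \leq \min(2n, 2r)$. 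Because the spectral sequence degenerates at $E_2$, the weights of the graded pieces of the induced filtration all lie in the stated interval, giving the conclusion in the semistable case.

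To reduce the general case to the semistable one, I would invoke de Jong's theorem on alterations: after passing to a suitable finite extension $K^{\prime}/K$, there exists a proper, surjective, generically finite morphism $f: X^{\prime} \rightarrow X_{K^{\prime}}$ with $X^{\prime}$ admitting a proper regular model over $\OO_{K^{\prime}}$ whose special fiber is a simple normal crossings divisor. The base change $K \rightsquigarrow K^{\prime}$ is harmless, since $K^{\sep} = (K^{\prime})^{\sep}$ and weights are insensitive to passing to a finite extension of the residue field. By the semistable case, $H^r_{\et}(X^{\prime}_{K^{\sep}}, \QQ_l)$ is then mixed with weights in the required range.

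Finally, since $X$ is smooth and proper, Poincar\'{e} duality supplies a Gysin pushforward $f_*$ with $f_* \circ f^* = \deg(f) \cdot \mathrm{id}$ on $H^r_{\et}(X_{K^{\sep}}, \QQ_l)$, so $f^*$ is a $G$-equivariant injection of $H^r_{\et}(X_{K^{\sep}}, \QQ_l)$ into $H^r_{\et}(X^{\prime}_{K^{\sep}}, \QQ_l)$; any $G$-stable submodule of a mixed module is mixed with the same weight bounds, finishing the argument. The principal obstacle is the appeal to de Jong's alterations, a deep but by now standard input in this circle of ideas; by contrast, the bookkeeping of indices within the RZ spectral sequence is entirely elementary.
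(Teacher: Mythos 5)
Your proposal is correct and follows exactly the route the paper takes, which it states in two sentences: the semistable case via the weight bounds forced on the $E_1$-terms of the Rapoport--Zink spectral sequence, and the general case by de Jong's alterations together with the standard $f_*\circ f^* = \deg(f)\cdot\id$ trace argument. Your index bookkeeping for the $E_1$-terms checks out, so you have simply supplied the details the paper leaves implicit.
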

\begin{proof}
If $X$ has strictly semistable reduction, i.e., $X$ is isomorphic to the general fiber of a 
scheme $\CX$ that is proper over $\OO$, and whose special fiber is a divisor with 
simple normal crossings, then this follows from the weight spectral sequence.  
The general case follows by de Jong's theory of alterations~\cite{dJ}.
\end{proof}

\begin{prop} \label{prop:cohomology}
Let $X$ be a smooth $n$-dimensional variety over $K$, and $\overline{X}$ a compactification
of $X$ such that $\overline{X} - X$ is a divisor with simple normal crossings.  Then for $r \leq n$,
$H^r_{\et}(X_{K^{\sep}}, \QQ_l)$ is mixed of weights between $0$ and $2r$, and the natural map
$$W_0H^r_{\et}(\overline{X}_{K^{\sep}}, \QQ_l) \rightarrow W_0H^r_{\et}(X_{K^{\sep}}, \QQ_l)$$
is an isomorphism.
\end{prop}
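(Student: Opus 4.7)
The plan is to use the Deligne/Gysin weight spectral sequence associated to the open immersion $j\colon X \hookrightarrow \overline{X}$. Writing the boundary as $D = \overline{X}\setminus X = D_1 \cup \cdots \cup D_m$ with each $D_i$ smooth, let $D^{(k)}$ denote the disjoint union of $k$-fold intersections $D_{i_1}\cap\cdots\cap D_{i_k}$, with $D^{(0)} = \overline{X}$. Each $D^{(k)}$ is smooth and proper over $K$ of dimension $n-k$. The Deligne spectral sequence then takes the form
\[ E_1^{-k,\,r+k} = H^{r-k}_{\et}(D^{(k)}_{K^{\sep}},\, \QQ_l(-k)) \;\Longrightarrow\; H^r_{\et}(X_{K^{\sep}},\, \QQ_l), \qquad k \geq 0, \]
with Galois-equivariant differentials built from Gysin and restriction maps.

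The first claim is then a matter of tracking weights. The preceding theorem, applied to the smooth proper $D^{(k)}$, shows $H^{r-k}_{\et}(D^{(k)}_{K^{\sep}},\,\QQ_l)$ is mixed of weights in $[\max(0,\,2r-2n),\, \min(2(n-k),\,2(r-k))]$; for $r\leq n$ this interval is $[0,\,2(r-k)]$. Tate-twisting by $(-k)$ shifts weights up by $2k$, so $E_1^{-k,\,r+k}$ is mixed of weights in $[2k,\, 2r]$. Because morphisms of $\QQ_l$-Galois representations are strict for weights, the spectral sequence differentials preserve the weight filtration, and the same bounds descend to $E_\infty$. Hence $H^r_{\et}(X_{K^{\sep}},\,\QQ_l)$ is mixed of weights in $[0,\, 2r]$.

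For the second claim, the only $E_1$ term with a nonzero weight-zero part is $E_1^{0,r} = H^r_{\et}(\overline{X}_{K^{\sep}},\,\QQ_l)$, so $W_0 H^r_{\et}(X_{K^{\sep}},\,\QQ_l)$ is concentrated at position $(0,r)$. Every differential out of $E_s^{0,r}$ lands in $E_s^{s,\,r-s+1}$ with $s\geq 1$, which vanishes since the spectral sequence lives in $p\leq 0$. Every differential into $E_s^{0,r}$ comes from $E_s^{-s,\, r+s-1}$, a subquotient of the corresponding $E_1$-term which has weights $\geq 2s\geq 2$; by Galois-equivariance such a map has image of weight $\geq 2$ and so does not affect $W_0$. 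Consequently $W_0 E_\infty^{0,r} = W_0 E_1^{0,r} = W_0 H^r_{\et}(\overline{X}_{K^{\sep}},\,\QQ_l)$, and the edge map of the spectral sequence, which is the restriction $H^r(\overline{X}) \to H^r(X)$, supplies the asserted isomorphism.

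The chief obstacle is bookkeeping: fixing a consistent indexing convention for the Deligne spectral sequence in the $l$-adic setting over a DVR fraction field, handling Tate twists carefully in the weight shift, and confirming that the SS edge map matches the geometric restriction. Once these conventions are pinned down and the preceding theorem is invoked for the smooth proper strata $D^{(k)}$, the weight bounds and the $W_0$ isomorphism fall out of a standard diagonal-and-edge-map analysis, with no additional geometric input required.
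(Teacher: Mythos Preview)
Your argument is correct. The paper, however, takes a slightly more hands-on route: instead of invoking the full Deligne/Gysin spectral sequence, it peels off the boundary components one at a time. Setting $X_i = \overline{X}\setminus(\overline{D}_1\cup\cdots\cup\overline{D}_i)$ and $D_i = \overline{D}_{i+1}\setminus(\overline{D}_1\cup\cdots\cup\overline{D}_i)$, the paper uses the Gysin long exact sequence for the pair $(X_i,X_{i+1})$,
\[
H^{r-2}_{\et}((D_i)_{K^{\sep}},\QQ_l(-1)) \to H^r_{\et}((X_i)_{K^{\sep}},\QQ_l) \to H^r_{\et}((X_{i+1})_{K^{\sep}},\QQ_l) \to H^{r-1}_{\et}((D_i)_{K^{\sep}},\QQ_l(-1)),
\]
and observes inductively that the outer terms are mixed of weights $\geq 2$, so the weight bound propagates and the $W_0$ pieces of $H^r(X_i)$ and $H^r(X_{i+1})$ agree. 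Your spectral sequence is precisely the machinery that packages all of these Gysin sequences at once; the paper's induction is in effect computing the same $E_1$ page one column at a time. The paper's version has the virtue of needing only the Gysin sequence and no spectral-sequence bookkeeping, while your version makes the structure (and the role of the strata $D^{(k)}$) more transparent and would generalize more readily to statements about other weight-graded pieces.
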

\begin{proof}
Let $D$ be the divisor $\overline{X} \setminus X$, and let $\overline{D}_1, \dots, \overline{D}_r$ be 
its irreducible components.  Let $X_i$ be the open subset $X \setminus 
\{\overline{D}_1 \cup \dots \cup \overline{D}_i\}$.  We proceed
by induction on $i$; the case $i = 0$ is clear. 

Suppose the proposition is true for $i$.  Define 
$$D_i = \overline{D}_{i+1} \setminus \{\overline{D}_1 \cup \dots \cup \overline{D}_i\},$$
so that $X_i \setminus X_{i+1} = D_i$.
By the inductive hypothesis the spaces $H^r_{\et}((X_i)_L, \QQ_l)$
and $H^r_{\et}((D_i)_L, \QQ_l)$
are mixed of weights between $0$ and $2r$ for $r \leq n$.  We have a Gysin sequence:
$$
\begin{array}{cccccc}
H^{r-2}_{\et}((D_i)_L, \QQ_l(-1)) & \rightarrow &
H^r_{\et}((X_i)_L, \QQ_l) & \rightarrow & 
H^r_{\et}((X_{i+1})_L, \QQ_l) & \rightarrow \\ 
H^{r-1}_{\et}((D_i)_L, \QQ_l(-1)) & \rightarrow & \dots,
\end{array}
$$
and the first and last terms are mixed of weights between $2$ and $2r$.  It follows that
$H^r_{\et}((X_{i+1})_L, \QQ_l)$ is mixed of weights between $0$ and $2r$ as
required.  We also obtain an isomorphism
$$W_0H^r_{\et}((X_i)_L, \QQ_l) \cong W_0H^r_{\et}((X_{i+1})_L, \QQ_l)$$ 
and hence by induction the desired isomorphism
$$W_0H^r_{\et}(\overline{X}_L, \QQ_l) \cong W_0H^r_{\et}(X_L, \QQ_l).$$
\end{proof}

\section{Cohomology of sch\"on varieties} \label{sec:schon}

The control that tropical geometry gives over the degenerations of sch\"on subvarieties
$X$ of $T$ has significant consequences on the level of cohomology.  In particular the theory
of vanishing cycles allows one to relate the {\'e}tale cohomology of a nice tropical compactification 
of $X$ to that of its tropical degeneration.  When the degeneration is a divisor with
simple normal crossings, this relationship is given by the Rapoport-Zink weight spectral
sequence. 

We apply this sequence in the setting of tropical geometry.  Let $X$ be sch\"on.  By
Proposition~\ref{prop:degeneration} there is a polyhedral complex $\Sigma$, with
support equal to $\trop(X)$ and corresponding toric scheme $\PP$, such that the pair 
$(X,\PP)$ is tropical, the corresponding compactification $\overline{X}$ of $X$
is smooth with simple normal crossings boundary, and the special fiber of the corresponding
tropical degeneration $\CX$ is a divisor with simple normal crossings.

The polyhedral complex $\Gamma_{(X,\PP)}$ encodes the combinatorics of the special fiber
$\CX_{k^{\sep}}$.  In particular $\CX_{k^{\sep}}$ is a union of smooth 
connected varieties $\CX_v$, where $v$ runs over the vertices of $\Gamma_{(X,\PP)}$.  The varieties
$\CX_{v_1}, \dots, \CX_{v_r}$ meet if and only if $v_1, \dots, v_r$ are the vertices of
a polyhedron in $\Gamma_{(X,\PP)}$.  
[Note that since $\CX_k$ is a simple normal crossings divisor, if $Y_0, \dots, Y_r$ intersect
in codimension $r$ then they are the only irreducible components of $\CX_{k^{\sep}}$ containing 
their intersection.]  

We have a natural map $\Gamma_{(X,\PP)} \rightarrow \Sigma$. 
Since $\Sigma$ is a triangulation of $\trop(X)$, and $\Gamma_{(X,\PP)}$ is a triangulation
of $\Gamma_X$, this induces a natural map
$$H^r(\trop X, \QQ_l) \rightarrow H^r(\Gamma_X, \QQ_l).$$
By the proof of Lemma \ref{lemma:cohomology}, this map is an isomorphism if $\CX_P$ is connected for
every polyhedron $P$ in $\Sigma$, or (equivalently) if $\ini_w X$ is connected
for every $w$ in $\trop(X)$. 

\begin{thm} \label{thm:main} There is a natural isomorphism 
$$H^r(\Gamma_X, \QQ_l) \cong W_0 H^r_{\et}(\overline{X}_{K^{\sep}},\QQ_l),$$
and hence a natural map
$$H^r(\trop(X), \QQ_l) \rightarrow W_0 H^r_{\et}(\overline{X}_{K^{\sep}},\QQ_l).$$
This map is an isomorphism if $\CX_P$ is connected for
every polyhedron $P$ in $\Sigma$.
\end{thm}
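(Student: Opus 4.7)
The plan is to apply the Rapoport-Zink weight spectral sequence to a model $\CX$ obtained from a normal crossings pair $(X,\PP)$ (Proposition \ref{prop:degeneration}), and to show that the weight-zero row of its $E_1$-page is naturally isomorphic, as a complex, to the simplicial cochain complex of the triangulation $\Gamma_{(X,\PP)}$ of $\Gamma_X$. Once this is established, degeneration at $E_2$ together with the nonnegativity of weights (Proposition \ref{prop:cohomology}) will identify $W_0 H^r_{\et}(\overline X_{K^{\sep}}, \QQ_l)$ with $H^r(\Gamma_X, \QQ_l)$, and the map from $H^r(\trop(X), \QQ_l)$ will factor through the cellular projection $\Gamma_{(X,\PP)} \to \Sigma$.

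First I would extract the weight-zero row of the $E_1$-page by setting $p := -r$ and $q := w + r = 0$ in the formula
$$E_1^{-r, w+r} = \bigoplus_{s \geq \max(0,-r)} H^{w-r-2s}_{\et}(\CX_{k^{\sep}}^{(2s + r)}, \QQ_l(-r-s)).$$
The constraints that the cohomological degree $w - r - 2s$ and the intersection index $2s + r$ both be nonnegative force $s = p$, so the sum collapses to $E_1^{p,0} \cong H^0_{\et}(\CX_{k^{\sep}}^{(p)}, \QQ_l)$, i.e.\ the free $\QQ_l$-module on the connected components of $\CX_{k^{\sep}}^{(p)}$. Next I would match these components with $p$-cells of $\Gamma_{(X,\PP)}$: the normal crossings condition makes every $\CX \cap U_P$ smooth, so irreducible and connected components coincide, and the inclusion-reversing bijection between polyhedra of $\Sigma$ and torus orbits of $\PP_k$ identifies $\CX_P$ with the $(p+1)$-fold intersection $\CX \cap \PP_{v_0} \cap \dots \cap \PP_{v_p}$ when $P$ has vertices $v_0, \dots, v_p$. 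Thus a $p$-cell $(P, Y)$ of $\Gamma_{(X, \PP)}$ corresponds canonically to a connected component of $\CX_{k^{\sep}}^{(p)}$. The $d_1$ description recalled in the excerpt then presents $d_1: E_1^{p, 0} \to E_1^{p+1, 0}$ as a signed sum of restriction maps $H^0(Y, \QQ_l) \to H^0(Y', \QQ_l)$ over pairs $Y' \subset Y$, which becomes the simplicial coboundary of $\Gamma_{(X, \PP)}$ once the signs are matched with a consistent orientation.

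Granting this chain-level identification, $E_2^{r, 0} = H^r(\Gamma_X, \QQ_l)$; combining degeneration at $E_2$ with Proposition \ref{prop:cohomology} yields $W_0 H^r_{\et}(\overline X_{K^{\sep}}, \QQ_l) \cong E_\infty^{r, 0} = H^r(\Gamma_X, \QQ_l)$, and Proposition \ref{prop:independence} makes the isomorphism intrinsic to $X$. For the second claim, the cellular map $\Gamma_{(X, \PP)} \to \Sigma$ sending $(P, Y) \mapsto P$ induces $H^r(\trop(X), \QQ_l) \to H^r(\Gamma_X, \QQ_l)$, which I would then compose with the isomorphism above to obtain the asserted map; if every $\CX_P$ is connected then this cellular map is a bijection, so $\Gamma_X \to \trop(X)$ is a homeomorphism and the induced map on cohomology is an isomorphism. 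The main obstacle I anticipate is the sign bookkeeping in the middle step: one must check that the signs built into the Rapoport-Zink restriction maps coincide, under the basis-level bijection, with those of a coherent orientation of $\Gamma_{(X, \PP)}$, so that the identification of $E_1^{\bullet, 0}$ with simplicial cochains is an isomorphism of complexes and not merely of graded $\QQ_l$-modules.
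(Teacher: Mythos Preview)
Your approach is the paper's, and the skeleton is correct: identify the $q=0$ row of $E_1$ with a cochain complex computing $H^*(\Gamma_X)$, then invoke degeneration at $E_2$. There is, however, one genuine oversight. Your bijection between $p$-cells $(P,Y)$ of $\Gamma_{(X,\PP)}$ and connected components of $\CX_{k^{\sep}}^{(p)}$ holds only for \emph{bounded} cells. In a normal crossings decomposition each unbounded $p$-cell has the form $P'+Q_0$ with $\dim P'<p$ (Proposition~\ref{prop:nc}) and hence fewer than $p+1$ vertices; the corresponding stratum $\CX_P$ sits inside a $(j+1)$-fold intersection with $j<p$ and contributes nothing to $\CX_{k^{\sep}}^{(p)}$. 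Thus $E_1^{\bullet,0}$ is the cochain complex of the \emph{bounded} subcomplex of $\Gamma_{(X,\PP)}$, not of the full complex. The paper closes this gap in a single clause by noting that the bounded subcomplex is homotopy equivalent to $\Gamma_{(X,\PP)}$ (one retracts along the recession directions), so the cohomologies agree.

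Two smaller remarks. Your appeal to Proposition~\ref{prop:cohomology} is unnecessary here: the vanishing $E_1^{p,q}=0$ for $q<0$ is already forced by the same degree constraints you used to collapse the sum, so $W_0H^r_{\et}(\overline{X}_{K^{\sep}},\QQ_l)=E_\infty^{r,0}$ comes directly from the spectral sequence being first-quadrant together with the purity of its rows. And the sign issue you flag is real but routine: once the vertices of $\Sigma$ are totally ordered and each bounded simplex is oriented accordingly, the Rapoport--Zink restriction signs agree with the simplicial coboundary signs (both are alternating sums over the omitted vertex), which is why the paper does not dwell on it.
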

\begin{proof}
The bottom nonzero row of the $E_1$ term of the Rapoport-Zink spectral sequence (i.e., the $w=-r$ row)
is the complex:
$$
H^0_{\et}(\CX_{k^{\sep}}^{(0)},\QQ_l) \rightarrow
H^0_{\et}(\CX_{k^{\sep}}^{(1)},\QQ_l) \rightarrow
H^0_{\et}(\CX_{k^{\sep}}^{(2)},\QQ_l) \rightarrow \dots
$$
in which the horizontal maps are restriction maps.
This is simply the coboundary complex of the polyhedral complex formed by the bounded 
cells of $\Gamma_{(X,\PP)}$.  This polyhedral complex is homotopy equivalent to 
$\Gamma_{(X,\PP)}$.  We thus have a natural isomorphism
$$E_2^{r,0} \cong H^r(\Gamma_X, \QQ_l).$$
\end{proof}

\begin{rem} \rm Theorem~\ref{thm:main} shows in particular that the space
$W_0 H^r_{\et}(\overline{X}_{K^{\sep}},\QQ_l)$, which {\em a priori} depends on
$\overline{X}$ and thus a choice of $\Sigma$, in fact depends only on $X$ and is
{\em independent} of $\Sigma$.  Proposition~\ref{prop:cohomology} establishes this 
directly on the level of cohomology.
\end{rem}

The above results allow us to translate results about the cohomology of complete intersections 
in toric varieties into results about their tropicalizations.  For instance:

\begin{cor} \label{cor:CI}
Let $X$ be a sch\"on subvariety of $T$, and $\PP_K$ a smooth projective toric variety of $T$ such that:
\begin{enumerate}
\item the closure $Z$ of $X$ in $\PP_K$ is a smooth complete intersection of ample divisors, and
\item the boundary $Z \setminus X$ is a divisor with simple normal crossings.
\end{enumerate}
Then $H^r(\Gamma_X, \QQ_l) = 0$ for $1 \leq r < \dim X$.
\end{cor}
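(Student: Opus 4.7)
The plan is to reduce, via Theorem~\ref{thm:main} and Proposition~\ref{prop:cohomology}, to the vanishing of the weight zero part of the cohomology of the smooth projective compactification $Z$, which should follow from weak Lefschetz together with the purity of the cohomology of smooth projective toric varieties.

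First, I would invoke Proposition~\ref{prop:degeneration} to produce a tropical compactification $\overline{X}$ of $X$ with simple normal crossings boundary. Theorem~\ref{thm:main} then gives an isomorphism $H^r(\Gamma_X, \QQ_l) \cong W_0 H^r_{\et}(\overline{X}_{K^{\sep}}, \QQ_l)$. Applying Proposition~\ref{prop:cohomology} first to the SNC compactification $(\overline{X}, X)$, and then to the given SNC compactification $(Z, X)$ supplied by hypothesis (2), produces canonical isomorphisms
$$W_0 H^r_{\et}(\overline{X}_{K^{\sep}}, \QQ_l) \cong W_0 H^r_{\et}(X_{K^{\sep}}, \QQ_l) \cong W_0 H^r_{\et}(Z_{K^{\sep}}, \QQ_l)$$
for $r \leq \dim X$. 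This reduces the corollary to showing that $W_0 H^r_{\et}(Z_{K^{\sep}}, \QQ_l) = 0$ for $1 \leq r < \dim X$.

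To close the argument, I would apply weak Lefschetz: since $Z$ is obtained from the smooth projective toric variety $\PP_K$ by successively intersecting with ample divisors, and both $Z$ and $\PP_K$ are smooth, the restriction maps yield an isomorphism $H^r_{\et}(\PP_{K^{\sep}}, \QQ_l) \cong H^r_{\et}(Z_{K^{\sep}}, \QQ_l)$ for $r < \dim Z = \dim X$. The $\ell$-adic cohomology of a smooth projective toric variety is spanned by the classes of torus-invariant subvarieties and hence is pure of weight $r$ in degree $r$ (and vanishes for $r$ odd); in particular $W_0 H^r_{\et}(\PP_{K^{\sep}}, \QQ_l) = 0$ for all $r \geq 1$, which supplies the required vanishing.

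The only real technicality is that the weight formalism of Section~\ref{sec:weight}, following Ito, is stated for residue fields that are purely inseparable over finitely generated fields, so to invoke Deligne's purity for $\PP$ and $Z$ one must first spread them out over a finitely generated subring; this is routine. The conceptual crux is really the double application of Proposition~\ref{prop:cohomology}, which allows the weight zero cohomology to be transferred freely between any two SNC compactifications of the same very affine $X$, so that the geometrically transparent compactification $Z$ can be used in place of the tropically built $\overline{X}$.
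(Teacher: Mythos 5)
Your proposal is correct and follows essentially the same route as the paper's proof: Theorem~\ref{thm:main} plus Proposition~\ref{prop:cohomology} to transfer $W_0$ from $\Gamma_X$ through $X$ to the complete intersection $Z$, then weak Lefschetz and the purity of the cohomology of the smooth projective toric variety $\PP_K$. The only cosmetic difference is that the paper deduces $W_0 H^r_{\et}(\PP_{K^{\sep}},\QQ_l)=0$ for $r>0$ from good reduction via the weight spectral sequence rather than from the torus-invariant cycle description, and your explicit double application of Proposition~\ref{prop:cohomology} is exactly what the paper's terser phrasing implicitly relies on.
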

\begin{proof}
By Proposition~\ref{prop:degeneration} and Theorem~\ref{thm:main} there is a tropical
pair $(X,\PP^{\prime})$, with corresponding compactification $\overline{X}$ of $X$, such that
$H^r(\Gamma_X, \QQ_l)$ is isomorphic to $W_0 H^r_{\et}(\overline{X}_{K^{\sep}},\QQ_l)$.
By Proposition~\ref{prop:cohomology} the latter is isomorphic to
$W_0 H^r_{\et}(Z_{K^{\sep}},\QQ_l)$.  

Since $Z$ is a complete intersection in $\PP_K$, the Lefschetz hyperplane theorem shows that
for $r < \dim X$, the restriction map
$$H^r_{\et}(\PP_{K^{\sep}},\QQ_l) \rightarrow H^r_{\et}(Z_{K^{\sep}},\QQ_l)$$ 
is an isomorphism.  But $\PP_K$ is a smooth toric variety, and hence has good reduction.
The weight spectral sequence thus shows that $W_0 H^r_{\et}(\PP_{K^{\sep}},\QQ_l) = 0$
for $r > 0$.
\end{proof}

Under more restrictive hypotheses on $X$, we can turn the above result into a result about the
cohomology of $\trop(X)$.  This will be the main goal of section~\ref{sec:CI}.

\section{Monodromy} \label{sec:monodromy}

In many situations, the weight filtration has an alternative interpretation in terms of monodromy.
Let $X$ be a variety over $K$, and consider the base change $X_{K^{\sep}}$ of $X$ to $K^{\sep}$.
The group $\Gal(K^{\sep}/K)$ admits a map to $\Gal(k^{\sep}/k)$; the kernel is the
inertia group $I_K$.  The group $I_K$ is profinite; if $l$ is prime to the characteristic of $k$
then the pro-$l$ part $I_K^{(l)}$ of $I_K$ is isomorphic to $\ZZ_l(1)$.  (The Tate twist here refers
to the fact that the quotient $\Gal(k^{\sep}/k)$ acts on $I_K^{(l)}$ by conjugation in the
same way that it acts on the inverse limit of the roots of unity $\mu_{l^n}$.

The group $\Gal(K^{\sep}/K)$ acts on the {\'e}tale cohomology
$H^i_{\et}(X_{K^{\sep}},\QQ_l)$ for any prime $l$.  This action is quasi-unipotent, i.e.
a subgroup of $H$ of $I_K$ of finite index acts unipotently on $H^i_{\et}(X_{K^{\sep}},\QQ_l)$.
(And thus the action of $H$ factors through $I_K^{(l)}$.)
In particular there is a nilpotent map 
$$N: H^i_{\et}(X_{K^{\sep}},\QQ_l) \rightarrow H^i_{\et}(X_{K^{\sep}},\QQ_l(-1))$$
called the {\em monodromy operator} such that for all $\sigma \in H$, $\sigma$ acts on
$H^i_{\et}(X_{K^{\sep}},\QQ_l)$ by $\exp(t_l(\sigma)N)$, where $t_l$ is the map
$I_K \rightarrow I_K^{(l)} \cong \ZZ_l(1).$

Now, if $V$ is any finite dimensional vector space, with a nilpotent endomorphism $N$ such that $N^r = 0$, 
then there is a unique increasing filtration $\{V_i\}$ on $V$ such that:
\begin{itemize}
\item $V_r = V$,
\item $V_{-r} = 0$,
\item $N$ maps $V_i$ to $V_{i-2}$, and
\item $N^i$ induces an isomorphism $V_i/V_{i-1} \rightarrow V_{-i}/V_{-i-1}$.
\end{itemize}
(see~\cite{WeilII} I, 1.7.2 for details.)  We thus obtain a natural filtration, called the
monodromy filtration, on $H^i_{\et}(X_{K^{\sep}}, \QQ_l)$.

\begin{rem} \rm If $V$ consists of a single Jordan block of dimension $r$, one sees easily that
$V_i/V_{i-1}$ is one-dimensional for $i \in \{r-1, r-3, \dots, -r+1\}$, and zero otherwise.  
Moreover, $V_{r-1-2k}$ is the image of $N^k$ for $0 \leq k \leq r-1$.
It is thus straightforward to read off the filtration coming from an arbitrary $V$ and $N$ from
a Jordan normal form for $N$.  The filtration is independent of choices, even though the Jordan
normal form of $N$ is not.
\end{rem}

When $X$ has a semistable model, one can read the monodromy action on $X$ off
of the weight spectral sequence $E^{p,q}$.  More precisely, there is a
monodromy operator
$N: E_1^{p,q} \rightarrow E_1^{p+2,q-2}(-1)$, which converges to the the monodromy operator $N$
on $H^i_{\et}(X_{K^{\sep}}, \QQ_l)$.  It is easily described:
if $H^i_{\et}(Y,\QQ_l(-m))$ occurs as a direct summand of $E_1^{p,q}$, and
$H^i_{\et}(Y,\QQ_l(-m+1))$ occurs as a direct summand of $E_1^{p+2,q-2}$, then
the corresponding direct summand of $N$ is the identity 
$$H^i_{\et}(Y,\QQ_l(-m)) \rightarrow H^i_{\et}(Y,\QQ_l(-m+1))(-1).$$
All other direct summands of $N$ are the zero map.

The following conjecture (the ``weight-monodromy conjecture'') relates the weight filtration to the 
monodromy filtration in this situation:

\begin{conj} \label{conj:w-m}
The top nonzero power of the monodromy operator:
$$N^r: E_2^{-r,w+r} \rightarrow E_2^{r,w-r}$$
is an isomorphism for all $r,w$.  In particular the weight filtration on $H^i_{\et}(X_{K^{\sep}},\QQ_l)$
$E$ coincides (up to a shift in degree) with the monodromy filtration; that is,
$$H^w_{\et}(X_{K^{\sep}},\QQ_l)_{-r}/H^w_{\et}(X_{K^{\sep}},\QQ_l)_{-r-1} \cong 
W_{w-r} H^w_{\et}(X_{K^{\sep}},\QQ_l).$$
\end{conj}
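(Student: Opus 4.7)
The plan is to attack the conjecture by constructing an auxiliary $\mathfrak{sl}_2$-action on the $E_1$-page of the Rapoport-Zink spectral sequence, with the heart of the argument being hard Lefschetz applied stratum by stratum to $\CX_{k^{\sep}}$. The starting point is the completely explicit shape of the monodromy operator on $E_1$: on each summand $H^i_{\et}(Y,\QQ_l(-m))$ with $Y$ a component of $\CX^{(j)}_{k^{\sep}}$, the operator $N$ acts by the identity (composed with the Tate twist identification) onto the matching summand in the neighboring column. To pair $N$ with an opposite nilpotent operator, I would choose, for each irreducible component of $\CX_k$, a relatively ample class, and let $L$ act on $E_1$ as the direct sum of cup products with these classes; this is a bidegree $(2,-2)$ operator that raises weights by $2$ on each summand. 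The commutator $[L,N]$ should recover the semisimple grading operator $H$ that reads off the weight grading, assembling $(L,N,H)$ into an $\mathfrak{sl}_2$-triple on $E_1$.

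Assuming such a triple is in hand, the next step is to verify that $L$ commutes with the differentials of the spectral sequence, so that the $\mathfrak{sl}_2$-action descends to $E_2$. Compatibility with the restriction components of $d_1$ amounts to the requirement that the hyperplane classes restrict coherently between nested strata, while compatibility with the Gysin components is the projection formula. Once $\mathfrak{sl}_2$ acts on $E_2$, the map $N^r\colon E_2^{-r,w+r}\to E_2^{r,w-r}$ is identified with the $r$-th power of the lowering operator on a finite-dimensional $\mathfrak{sl}_2$-representation whose highest weight space contains $E_2^{-r,w+r}$. Hard Lefschetz on each smooth proper stratum, together with the standard structure theory of $\mathfrak{sl}_2$-representations, then forces $N^r$ to be an isomorphism. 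The coincidence of weight and monodromy filtrations on $H^w_{\et}(X_{K^{\sep}},\QQ_l)$ is a formal consequence of this decomposition.

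The main obstacle, and the reason the general conjecture remains open, is producing an $\mathfrak{sl}_2$-triple that is simultaneously compatible with the differentials of the spectral sequence and with the Galois action. In equal characteristic one can transport the $\mathfrak{sl}_2$-action from the limit mixed Hodge structure via Steenbrink's construction, which is essentially the route of Ito \cite{Ito}; in mixed characteristic, there is no Hodge-theoretic crutch, and the known cases proceed via de Jong's alterations combined with \emph{ad hoc} input in low dimension (curves, surfaces) or for abelian varieties. In the sch\"on setting here, one might exploit the combinatorial rigidity encoded by Theorem~\ref{thm:main} — which identifies the weight-zero row of $E_1$ with the cochain complex of $\Gamma_X$ — to control the extreme rows of the spectral sequence directly, and thereby establish at least the outermost instance $N^w\colon E_2^{-w,2w}\to E_2^{w,0}$ independently of the full conjecture; this outermost case already suffices for the volume-pairing interpretation in Proposition~\ref{prop:npower}.
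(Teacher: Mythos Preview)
The statement you are addressing is labeled a \emph{conjecture} in the paper, and the paper does not supply a proof; it explicitly records that the weight--monodromy conjecture is known for curves, surfaces, and (via Ito) in equal characteristic, but remains open in mixed characteristic. There is therefore no ``paper's own proof'' to compare your proposal against.

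Your proposal is an honest and well-informed sketch of the standard $\mathfrak{sl}_2$-triple strategy, and you correctly identify the genuine obstruction: constructing a raising operator $L$ compatible with both the spectral-sequence differentials and the Galois action requires hard Lefschetz input that is not available in mixed characteristic without further hypotheses. So what you have written is not a proof, and you say as much. The suggestion in your final paragraph---to use the combinatorial identification of the extreme rows of $E_1$ with the (co)chain complex of $\Gamma_X$ to handle the outermost case $N^d\colon E_2^{-d,2d}\to E_2^{d,0}$ directly---is reasonable in spirit, but note that the paper's Proposition~\ref{prop:npower} does not actually \emph{prove} that $N^d$ is an isomorphism either; it only identifies the map with the volume pairing, and the nondegeneracy of that pairing is left implicit (and is itself a nontrivial combinatorial statement about $\Gamma_X$). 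So even the ``outermost instance'' you propose to salvage would require additional argument beyond what is in the paper.
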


The weight-monodromy conjecture is well-known to hold for curves and surfaces.  If
$\OO$ is an equal characteristic discrete valuation ring, it is a difficult theorem of
Ito (\cite{Ito}, Theorem 1.1).  It is open in general when $\OO$ has mixed characteristic.

For the remainder of this section we assume we are in a situation where Conjecture~\ref{conj:w-m}
holds.  The following result, due to Speyer (\cite{Sp3}, Theorem 10.8) for curves, follows 
immediately:

\begin{cor} \label{cor:bound} 
Let $b_r(\Gamma_X)$ and $b_r(X)$ denote the $r$th Betti numbers of $\Gamma_X$ and $X$,
respectively.  Then we have:
$$b_r(\Gamma_X) \leq \frac{1}{r+1} b_r(X).$$
\end{cor}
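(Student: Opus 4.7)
The plan is to combine Theorem~\ref{thm:main} with the structure of the monodromy operator $N$ on $H^r_{\et}(\overline{X}_{K^{\sep}}, \QQ_l)$ provided by Conjecture~\ref{conj:w-m}, and then reduce to an elementary linear-algebra statement about Jordan block sizes.

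By Theorem~\ref{thm:main}, $b_r(\Gamma_X) = \dim H^r(\Gamma_X, \QQ_l) = \dim W_0 H^r_{\et}(\overline{X}_{K^{\sep}}, \QQ_l)$. Under our standing assumption that weight-monodromy holds, the weight filtration on $H^r := H^r_{\et}(\overline{X}_{K^{\sep}}, \QQ_l)$ matches the monodromy filtration after a shift of $r$. Since $\overline{X}$ is smooth and proper, the weights on $H^r$ lie in $\{0,1,\dots,2r\}$; equivalently, the monodromy filtration $V_\bullet$ satisfies $V_r = H^r$ and $V_{-r-1} = 0$, and under the identification of Conjecture~\ref{conj:w-m} one has $W_0 H^r \cong V_{-r}/V_{-r-1} = V_{-r}$. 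In particular $N^{r+1} = 0$ on $H^r$, so every Jordan block of $N|_{H^r}$ has size at most $r+1$.

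Now let $m_s$ denote the number of Jordan blocks of size $s$ in the Jordan decomposition of $N$ acting on $H^r$. Then $b_r(X) = \dim H^r = \sum_{s=1}^{r+1} s\, m_s$. On the other hand, a single Jordan block of size $s$ contributes a one-dimensional summand to $V_j/V_{j-1}$ exactly when $j \in \{s-1, s-3, \dots, -(s-1)\}$. For $j = -r$, this forces $s-1 \geq r$, hence (given $s \leq r+1$) $s = r+1$, in which case each such block contributes exactly $1$. Therefore $\dim W_0 H^r = \dim V_{-r} = m_{r+1}$, and
\[
b_r(\Gamma_X) = m_{r+1} \leq \frac{1}{r+1}\sum_{s=1}^{r+1} s\, m_s = \frac{b_r(X)}{r+1}.
\]

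There is no serious obstacle here: the real content has already been absorbed into Theorem~\ref{thm:main} (the identification of $H^r(\Gamma_X,\QQ_l)$ with $W_0 H^r$) and into the assumed weight-monodromy conjecture (which supplies the sharp bound $N^{r+1} = 0$ on $H^r$ and the identification of $W_0$ with the bottom piece of the monodromy filtration). The remainder is the one-line Jordan-block computation above, equivalently the observation that in the $\mathfrak{sl}_2$-representation on $H^r$ determined by $N$, only irreducible summands of highest weight $r$ contribute to the lowest monodromy piece, and each such summand has dimension $r+1$.
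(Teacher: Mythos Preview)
Your proof is correct and follows essentially the same approach as the paper's own argument: both combine Theorem~\ref{thm:main} with the assumed weight-monodromy conjecture to identify $b_r(\Gamma_X)$ with the dimension of the lowest monodromy piece $H^r_{\et}(\overline{X}_{K^{\sep}},\QQ_l)_{-r}$, observe that this counts the Jordan blocks of size $r+1$, and conclude by the trivial inequality $(r+1)m_{r+1}\le \sum_s s\,m_s$. Your version is simply more explicit about the Jordan-block bookkeeping and about why $N^{r+1}=0$.
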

\begin{proof}
Theorem~\ref{thm:main}, together with the weight-monodromy conjecture, shows that $b_r(\Gamma_X)$ 
is the dimension of $H^r_{\et}(\overline{X}_{K^{\sep}},\QQ_l)_{-r}$.  The dimension of this
piece of the monodromy filtration counts the number of Jordan blocks of size $r+1$ 
in a Jordan normal form for $N$ acting on $H^r_{\et}(\overline{X}_{K^{\sep}},\QQ_l)$.  
In particular the dimension of the latter is at least $r+1$ times the dimension of the 
former.
\end{proof}

Suppose $X_K$ is an $n$-dimensional variety.  There is a geometric interpretation of the action of the $n$th power of the monodromy operator on the middle-dimensional cohomology.
\begin{prop} \label{prop:npower} The $d$th power of the monodromy map acting on middle cohomology,
\[N^d:H^d_{\et}(X_{K^{\sep}},\QQ_l)_{d}/H^d_{\et}(X_{K^{\sep}},\QQ_l)_{d-1}\rightarrow H^d_{\et}(X_{K^{\sep}},\QQ_l)_{-d}(-d).\]
is the map
\[H_d(\Gamma_{(X,\PP)},\QQ_l)(-d)\rightarrow H^d(\Gamma_{(X,\PP)},\QQ_l)(-d)\]
induced from the ``volume pairing'' on the parameterizing complex $\Gamma_{(X,\PP)}$ which takes a pair of (integral) $d$-dimensional cycles to the (oriented) lattice volume of their intersection.  
\end{prop}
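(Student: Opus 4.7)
The plan is to trace $N^d$ through the Rapoport--Zink weight spectral sequence after identifying both sides with the homology and cohomology of $\Gamma_X$ respectively. The cohomology identification $E_2^{d,0}\cong H^d(\Gamma_X,\QQ_l)$ is already established in the proof of Theorem~\ref{thm:main}. The first task is to prove the homology-dual statement
\[E_2^{-d,2d}\;\cong\;H_d(\Gamma_X,\QQ_l)(-d).\]
Only the $s=0$ summand $H^0_{\et}(\CX^{(d)},\QQ_l(-d))$ contributes to $E_1^{-d,2d}$, since $\CX^{(j)}$ is empty for $j>d$; this is the free $\QQ_l(-d)$-module on the top cells of $\Gamma_{(X,\PP)}$. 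Similarly $E_1^{-d+1,2d}$ reduces to $H^2_{\et}(\CX^{(d-1)},\QQ_l(-d+1))$, which by Poincar\'e duality on each smooth proper curve component is the free $\QQ_l(-d)$-module on the $(d-1)$-cells. The $d_1$-differential is the Gysin map, which is literally the combinatorial cellular boundary of $\Gamma_{(X,\PP)}$. Since $E_1^{-d-1,2d}=0$ and $\Gamma_X$ has no cells above dimension $d$, this gives $E_2^{-d,2d}\cong Z_d(\Gamma_X,\QQ_l)(-d)=H_d(\Gamma_X,\QQ_l)(-d)$.

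Next I would compute $N^d$ on $E_1$-terms using the explicit rule recalled in section~\ref{sec:monodromy}. Inductively, each application of $N$ sends the $s=j$ summand $H^0(\CX^{(d)},\QQ_l(-d+j))$ of $E_1^{-d+2j,2d-2j}$ to the $s=j+1$ summand $H^0(\CX^{(d)},\QQ_l(-d+j+1))$ of $E_1^{-d+2(j+1),2d-2(j+1)}$ by the identity; after $d$ steps we reach the summand $H^0(\CX^{(d)},\QQ_l)(-d)$ of $E_1^{d,0}(-d)$. All other summands of the intermediate $E_1$-terms involve either a different stratum or a different cohomological degree, so are missed by the identity-chain rule. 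Passing to $E_2$ and using the identifications above, $N^d$ therefore induces the natural map that sends a $d$-cycle $\sigma=\sum a_c[c]$ to the cohomology class of the cochain $\sum a_c[c]^{\ast}$.

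The last step is to identify this map with the volume pairing. As a bilinear form on $H_d(\Gamma_X,\QQ_l)$, it evaluates $(\sigma,\tau)$ to $\sum_c a_c b_c$ in a basis of top cells. To match this with the oriented lattice volume of intersection, I would refine $\Sigma$ to a triangulation in which every top cell has unit lattice volume with respect to the integral affine structure pulled back from $\trop(X)$; on such a refinement the cell-wise identity pairing is tautologically the volume pairing. Naturality of the weight spectral sequence under refinement, together with Proposition~\ref{prop:independence}, then transports the identification back to the original $\Sigma$: each top cell $P$ of the coarse decomposition contains exactly $\operatorname{vol}(P)$ unit-volume cells of the refinement, which is precisely the weight that the lattice-volume pairing inserts.

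The principal obstacle is this last identification: verifying that the Poincar\'e duality and Gysin identifications used in the first step are equivariant under refinement of $\Sigma$ with the correct normalization, so that unwinding a unimodular refinement back to $\Sigma$ multiplies each cell's diagonal contribution by its lattice volume. A subsidiary technical point is to justify that the ``identity chain'' produced by $N^d$ is unobstructed by other summands; this is a direct dimension count, since in each $E_1^{-d+2j,2d-2j}$ the summand $H^0(\CX^{(d)},\cdot)$ is uniquely characterised as the one supported on a zero-dimensional stratum with cohomological degree $0$.
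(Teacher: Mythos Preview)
Your argument matches the paper's almost exactly through the identification of $E_2^{-d,2d}$ with $H_d(\Gamma_X,\QQ_l)(-d)$ via the Gysin/boundary maps, and the computation of $N^d$ as the chain of identities on $H^0_{\et}(\CX^{(d)},\QQ_l)$.

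The only divergence is your final step, and here you are making your life harder than necessary. You propose to refine $\Sigma$ so that every top cell has unit lattice volume and then transport the identification back, flagging the compatibility of the Gysin/Poincar\'e duality normalizations under refinement as the ``principal obstacle''. But this obstacle does not exist: by hypothesis $(X,\PP)$ is already a normal crossings pair, so the fan $\tSigma$ is simplicial and unimodular, and hence every bounded top-dimensional cell of $\Sigma$ (and thus of $\Gamma_{(X,\PP)}$) already has lattice volume $1$. The paper simply observes this in one line: ``every bounded top-dimensional cell of $\Gamma_{(X,\PP)}$ has volume $1$'', and therefore the cell-wise identity pairing $\sum_c a_c b_c$ \emph{is} the volume pairing on the nose. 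No refinement, no transport, no normalization check is needed. Once you notice this, your proof and the paper's are the same.
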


\begin{proof}
The term $H^d_{\et}(X_{K^{\sep}},\QQ_l)_{d}/H^d_{\et}(X_{K^{\sep}},\QQ_l)_{d-1}$ is computed by the $(-d,d)$-entry in the Rapoport-Zink spectral sequence.  The $d$th row is:
$$
H^0_{\et}(\CX_{k^{\sep}}^{(d)},\QQ_l)(-d) \rightarrow
H^2_{\et}(\CX_{k^{\sep}}^{(d-1)},\QQ_l)(-d+1) \rightarrow\dots\rightarrow
H^{2d}_{\et}(\CX_{k^{\sep}}^{(0)},\QQ_l)$$
where the horizontal map is the Gysin map of $\CX^{(k)}\rightarrow \CX^{(k-1)}$.  Since each component of $\CX^{(k)}$ is an $(d-k)$-dimensional smooth variety, this is the chain complex formed by the bounded cells of $\Gamma_{(X,\PP)}$.  Consequently, $E_2^{-k,d}\cong H_k(\Gamma,\QQ_l)(-k)$.  Now, 
\[N^d:E_2^{-d,d}\cong H_d(\Gamma,\QQ_l)(-d)\rightarrow E_2^{0,0}(-d)\cong H^d(\Gamma_{(X,\PP)},\QQ_l)(-d)\] is induced from the identity map on $H^0_{\et}(\CX_{k^{\sep}}^{(d)},\QQ_l)$.  In the language of homology and cohomology of $\Gamma_{(X,\PP)}$, it comes from the 
map $C_d(\Gamma_{(X,\PP)})\rightarrow C^d(\Gamma_{(X,\PP)})$ taking a simplex $F$ to the cocycle $\delta_F:C^d(\Gamma_{(X,\PP)})\rightarrow\ZZ$ that is the indicator function of $F$.  Consequently, if we view $N^d$ as  a bilinear pairing on $H_d(\Gamma,\QQ_l)$, it is the volume pairing as every bounded top-dimensional cell of $\Gamma_{(X,\PP)}$ has volume $1$.
\end{proof}

\begin{ex} \label{ex:curves}
\rm Suppose that $X_K$ is a curve of genus $g$.  Then $\CX_{k^{\sep}}^{(0)}$ is the 
normalization of $\CX_{k^{\sep}}$; it is a disjoint union of smooth curves $C_i$ of genus 
$g_i$.  On the other hand, $\CX_{k^{\sep}}^{(1)}$ is the set of singular points of 
$\CX_{k^{\sep}}$; each such point lies on exactly two of the $C_i$.  The corresponding
weight spectral sequence is nonzero only for $-1 \leq r \leq 1$ and $0 \leq w+r \leq 2$; it 
looks like:
$$
\begin{array}{ccccc}
\bigoplus_{p \in \CX_{k^{\sep}}^{(1)}} \QQ_l(-1) & \rightarrow & \bigoplus_i H^2_{\et}(C_i, \QQ_l) & & 0\\
0 & & \bigoplus_i H^1_{\et}(C_i,\QQ_l) & & 0\\
0 & & \bigoplus_i H^0_{\et}(C_i,\QQ_l) & \rightarrow & \bigoplus_{p \in \CX_{k^{\sep}}^{(1)}} \QQ_l
\end{array}
$$
The sequence clearly degenerates at $E_2$.  
The monodromy operator $N$ is nonzero only from $E_1^{-1,2}$ to $E_1^{1,0}(-1)$; 
it is simply the identity map on $$\bigoplus_{p \in \CX_{k^{\sep}}^{(1)}} \QQ_l(-1).$$
We thus find that the middle quotient of
the monodromy filtration on $H^1_{\et}(X_{K^{\sep}},\QQ_l)$ is isomorphic to
the direct sum of $H^1_{\et}(C_i,\QQ_l)$, whereas the top and bottom quotients are isomorphic
to $H_1(\Gamma,\QQ_l)$, (resp. $H^1(\Gamma,\QQ_l)$) where $\Gamma$ is the dual graph of 
$\CX_{k^{\sep}}$.  As above, the map $N: H_1(\Gamma,\QQ_l) \rightarrow H^1(\Gamma,\QQ_l)$
can be interpreted as the length pairing on $H_1(\Gamma,\QQ_l)$. \end{ex}

This example has a more classical interpretation.  If we let $J$ be the Jacobian of
$\overline{X}$, then the connected component of the identity in the
special fiber of the N{\'e}ron model of $J$ over $\OO$ is an extension of an abelian variety
by a torus; let $\chi$ be the character group of this torus.  Then $\chi$
is naturally isomorphic to $H_1(\Gamma,\ZZ)$.  Moreover, one has a monodromy pairing
$\chi \times \chi \rightarrow \ZZ$ (see \cite{SGA} for details.)  If one identifies
$\chi$ with $H_1(\Gamma,\ZZ)$, the resulting pairing on $H_1(\Gamma,\ZZ)$ is precisely
the length pairing.

To summarize:
\begin{prop} If $X$ is a sch\"on open subset of a smooth proper curve $\overline{X}$ over $K$, then
the ``length pairing'' on $\Gamma_X$ coincides with the monodromy pairing on
the character group $\chi$ associated to the Jacobian of $\overline{X}$.
\end{prop}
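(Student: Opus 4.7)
The plan is to combine Proposition~\ref{prop:npower} with Grothendieck's classical description in \cite[Exp.~IX]{SGA} of the monodromy pairing via the N\'eron model of the Jacobian.

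First, I would specialize Proposition~\ref{prop:npower} to $d=1$. This identifies the top power of monodromy
\[ N : H^1_{\et}(\overline{X}_{K^{\sep}}, \QQ_l)_1/H^1_{\et}(\overline{X}_{K^{\sep}}, \QQ_l)_0 \to H^1_{\et}(\overline{X}_{K^{\sep}}, \QQ_l)_{-1}(-1) \]
with the volume pairing on $H_1(\Gamma_X, \QQ_l)$. Since $(X,\PP)$ is a normal crossings pair, every node of $\CX_k$ has local equation $xy = \pi$, so each bounded edge of $\Gamma_{(X,\PP)}$ has integer-affine length one under the parameterization $\Gamma_X \to \trop(X)$; the volume pairing therefore coincides with the combinatorial length pairing on cycles in the dual graph.

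Next I would invoke Grothendieck's theorem: the identity component $\mathcal{J}^0_k$ of the special fiber of the N\'eron model of $J$ is an extension of an abelian variety $B$ by a torus $T$ with character group $\chi$, and the weight filtration on $V_l J = T_l J \otimes \QQ_l$ has its weight $-2$ piece equal to the Tate module of $T$ and its weight $-1$ graded piece equal to $V_l B$. The operator $N$ on $V_l J$ vanishes on the weight $-2$ part and lands there after twist, hence factors as a map $V_l J / V_l T \to V_l T(-1)$ which, after using the principal polarization of $J$ to identify $V_l J / V_l T$ with $V_l T^{\vee}$, is Grothendieck's monodromy pairing on $\chi$. Dualizing via $H^1_{\et}(\overline{X}_{K^{\sep}}, \QQ_l) \cong V_l J^{\vee}$ transports this pairing onto the operator induced by $N$ on the graded pieces of the monodromy filtration on $H^1$.

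The final step is to match the two incarnations of $\chi$. The canonical identification $\chi \cong H_1(\Gamma_X, \ZZ)$ comes from the Picard scheme of the nodal curve $\CX_k$: $\Pic^0 \CX_k$ is an extension of $\prod_i \Pic^0(\widetilde{C}_i)$ by a torus whose character group is precisely $H_1(\Gamma_X, \ZZ)$, and this torus is the toric part of $\mathcal{J}^0_k$. Combining the three steps, both the length pairing from Proposition~\ref{prop:npower} and Grothendieck's monodromy pairing describe the same operator on the graded piece $\mathrm{gr}_1 H^1_{\et}(\overline{X}_{K^{\sep}}, \QQ_l)$, proving the claim. The main obstacle I anticipate is bookkeeping: verifying that the isomorphism $\chi \cong H_1(\Gamma_X, \ZZ)$ produced by the weight spectral sequence of $\CX$ agrees with the classical one from the Picard scheme, and that the relevant signs, Tate twists, and compatibilities with the polarization line up. This should follow from functoriality of the Abel--Jacobi map $\overline{X} \to J$, but chasing the conventions is the delicate part.
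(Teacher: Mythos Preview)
Your approach is essentially the same as the paper's. In fact, the paper does not supply a separate formal proof of this proposition at all: it is introduced with the phrase ``To summarize'' and simply restates the content of Example~\ref{ex:curves} (which is Proposition~\ref{prop:npower} specialized to $d=1$) together with the preceding paragraph, which asserts the identification $\chi \cong H_1(\Gamma,\ZZ)$ and cites \cite{SGA} for the fact that the monodromy pairing on $\chi$ becomes the length pairing under this identification. Your proposal spells out in more detail the SGA~7 side (the weight filtration on $V_l J$ and the factorization of $N$ through the monodromy pairing) and the identification of $\chi$ with $H_1(\Gamma_X,\ZZ)$ via $\Pic^0$ of the nodal special fiber, and you correctly flag the compatibility of these identifications as the only delicate point; the paper simply takes this compatibility as known from \cite{SGA}.
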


This has connections to Mikhalkin's construction of tropical Jacobians.  Given a
tropical curve $\Gamma$, which Mikhalkin interprets as a metric graph, the length
pairing on $\Gamma$ induces a map $H_1(\Gamma) \rightarrow \Hom(H_1(\Gamma),\ZZ)$; Mikhalkin
defines the tropical Jacobian of $\Gamma$ to be the torus $\Hom(H_1(\Gamma),\RR)/H_1(\Gamma)$.
This torus has a natural integral affine structure induced from that on $\Hom(H_1(\Gamma),\RR)$.
See~\cite{MZ} for details.

Mikhalkin's definition is purely combinatorial but has a nice interpretation in
terms of the uniformization of abelian varieties: if $J$ is the Jacobian of
$\overline{X}$ then there is a pairing $\chi \times \chi \rightarrow \overline{K}^*$
whose valuation is the monodromy pairing.  This pairing gives an embedding of
$\chi$ as a lattice in the torus $\Hom(\chi, \overline{K}^*)$; the quotient
$\Hom(\chi,\overline{K}^*)/\chi$ is a rigid space isomorphic to $J$.
If we ``tropicalize'' this space by taking valuations, we obtain the
space $\Hom(\chi,\RR)/\chi$, where $\chi$ embeds into $\Hom(\chi,\RR)$ by
the monodromy pairing.  In particular the ``tropicalization'' of $J$ is
the tropical Jacobian of $\Gamma_X$.

The upshot is that- provided we are careful about what we mean by tropicalization-
``tropicalization'' commutes with taking Jacobians.

The following result of \cite{KMM} is another easy consequence of this point of view:
\begin{prop}[\cite{KMM}, Theorem 6.4] Let $X$ be a sch\"on open subset of an
elliptic curve $\overline{X}$ over $K$ with potentially multiplicative reduction.
Then $H_1(\Gamma_X,\ZZ)$ is isomorphic to $\ZZ$, and valuation of the $j$-invariant 
$j(\overline{X})$ is equal to $-a$, where $a$ is the length of the unique cycle in $\Gamma_X$.
\end{prop}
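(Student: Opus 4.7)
Since $\overline{X}$ has potentially multiplicative reduction, there is a finite extension $K'/K$ of ramification index $e$ over which $\overline{X}$ acquires split multiplicative reduction, providing a Tate uniformization $\overline{X}_{K'}\cong (K')^{*}/q^{\ZZ}$ with $\ord(q)>0$.  By the remark following Proposition~\ref{prop:degeneration}, we may work over $\OO_{K'}$ when building the normal crossings model; this does not affect $\Gamma_X$, whose affine structure is pulled back from $\trop(X)$ via the fixed valuation $\ord$ on $\bar K$.  The identity component of the special fiber of the N\'eron model of $\overline{X}_{K'}$ is then the torus $\GG_{m,k'}$, and its character group $\chi$ is isomorphic to $\ZZ$.

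By the proposition immediately above, $\chi\cong H_1(\Gamma_X,\ZZ)$, establishing the first claim.  Moreover, under this identification the length pairing on $H_1(\Gamma_X,\ZZ)$ coincides with the monodromy pairing on $\chi$, and for a Tate curve with parameter $q$ this pairing sends a pair of generators of $\ZZ$ to $\ord(q)$.  Hence the length $a$ of the unique cycle in $\Gamma_X$ equals $\ord(q)$.  The classical Tate expansion
\[
j(\overline{X}) \;=\; \tfrac{1}{q}+744+196884\,q+\cdots
\]
then yields $\ord(j(\overline{X}))=-\ord(q)=-a$, as required.

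The main obstacle, and the reason the author invokes the conductor-discriminant formula, is the bookkeeping involved in passing between $K$ and $K'$: one must check that the length $a$ and the quantity $\ord(j(\overline{X}))$ transform compatibly under base change, so that the identity proved after base change actually descends.  The conductor-discriminant relation $\val(\Delta)=f+m-1$ (linking the conductor exponent $f$, the valuation of the minimal discriminant, and the number $m$ of N\'eron components) matches these scalings by the common factor $e$.  With this verification in hand, the proof reduces to the explicit Tate-curve computation above.
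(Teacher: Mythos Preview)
Your argument via the preceding proposition (length pairing $=$ monodromy pairing on $\chi$) together with the explicit Tate uniformization is correct and yields the result cleanly. It is, however, a genuinely different route from the paper's. The paper does not invoke the monodromy pairing or the Tate $q$-expansion at all. Instead, after a ramified base change making $\Gamma_X$ integral (and hence $\overline{X}$ split multiplicative), it reads off directly from the tropical degeneration $\CX$ that the special fiber contains a cycle of $a$ rational curves, $a$ being the lattice length of the unique cycle in $\Gamma_X$; the conductor--discriminant formula (equivalently, the fact that Kodaira type $I_a$ forces $\ord(j)=-a$) then gives the conclusion immediately.

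Your third paragraph therefore misreads the role of the conductor--discriminant formula in the paper: it is the \emph{main} computational step linking the combinatorics of the special fiber to $\ord(j)$, not a bookkeeping device for the passage between $K$ and $K'$. The base-change compatibility in the paper is handled by the one-line observation that a degree-$d$ extension scales both $\Gamma_X$ and $\ord(j)$ by $d$. Your approach has the merit of deducing everything from the general monodromy-pairing proposition already established; the paper's approach is more self-contained and bypasses Tate uniformization entirely.
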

\begin{proof}
Replacing $\OO$ with a suitable ramified extension we may assume that $\Gamma_X$ is integral.
Then $\overline{X}$ has split multiplicative reduction.  This base change scales both $\Gamma_X$ and
the valuation of $j(\overline{X})$ by the degree $d$ of the extension.  Now the tropical
degeneration $\CX$ of $X$ associated to $\Gamma_X$ gives a model of $\overline{X}$ whose special
fiber contains a cycle of rational curves, of length equal to the lattice length $a$ of the unique
cycle in $\Gamma_X$.  The conductor-discriminant formula (\cite{Si}, Theorem 11.1) then shows that
the valuation of $j(X)$ is equal to $-a$.
\end{proof}

In fact, it is easy to see that any smooth curve $\overline{X}$ contains a sch\"on open subset:
take a semistable model of $\overline{X}$, embed it in $\PP^n_{\OO}$, let $\CT$ be the complement of
$n+1$ hyperplanes in general positionin $\PP^n_{\OO}$, and take $X = \CT \cap \overline{X}$.  
Then the compactification $\overline{X}$ of $X$ in $\PP^n_{\OO}$ is tropical, and one verifies
easily that the multiplication map is smooth.  Thus the above result applies to all elliptic curves
with potentially multiplicative reduction.
Luxton and Qu~\cite{LQ} have shown that any variety over a field of characteristic $0$ contains a
sch\"on open subset.

\section{Complete Intersections} \label{sec:CI}

In the constant coefficient case, a (Zariski) general hyperplane section of a sch\"on variety is
sch\"on.  Unfortunately this is no longer true in the nonconstant coefficient case.  For instance,
let $X_k$ be a singular hypersurface in $T_k$.  Then any hypersurface $X$ in $T_K$ that reduces
modulo $\pi$ to $X_k$ has $\ini_{(0,\dots,0)} X = X_k$, and hence cannot be sch\"on.  The set
of such $X$ is a rigid analytic open subset of the projective space of hypersurfaces of fixed degree.

As this example suggests, to study loci of sch\"on varieties in a nonconstant coefficient setting,
one needs to work with the rigid analytic topology
rather than the Zariski topology.  (For the basics of the theory of rigid analytic spaces we refer the
reader to~\cite{EKL} or~\cite{Schneider}; we use very little here.)  

To make precise the connection to rigid geometry, we first observe:

\begin{lemma} \label{lemma:transversality}
Let $\PP$ be a toric scheme, proper over $\OO$, and let $X$ be a subvariety of 
the open torus $T$ in $\PP \times_{\OO} \Spec K$.
Suppose that for all polyhedra $P$ in the polyhedral complex $\Sigma$ corresponding to $P$, the 
closure $\CX$ of $X$ in $\PP$ intersects $\PP_P$ transversely.  Then $X$ is sch\"on, and
$(X, \PP^{\prime})$ is a normal crossings pair, where $\PP^{\prime}$ is the open subset
of $\PP$ obtained by deleting all torus orbits that do not meet $\CX$.

Conversely, if $X$ is sch\"on and there exists a toric open subset $\PP^{\prime}$ of $\PP$
such that $(X,\PP^{\prime})$ is a normal crossings pair, then the closure of $X$ intersects $\PP_P$ 
transversely for all polyhedra $P$ in $\Sigma$.
\end{lemma}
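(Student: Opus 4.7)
The plan is to translate between the transversality condition and the defining data of a normal crossings pair using the correspondence between polyhedra $P$ in $\Sigma$, torus orbits $U_P$ in $\PP$, and strata $\CX_P = \CX \cap \PP_P$ of $\CX$, applying Proposition~\ref{prop:schon} in both directions.

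For the forward direction, suppose $\CX$ meets $\PP_P$ transversely for every $P \in \Sigma$. I first check that the subset $\Sigma' \subseteq \Sigma$ of polyhedra $P$ with $\CX \cap U_P \ne \emptyset$ is closed under taking faces, so that it defines an open toric subscheme $\PP' \subseteq \PP$. If $P \in \Sigma'$ and $P'$ is a face of $P$, then $\PP_{P'} \supseteq \PP_P$, so $\CX \cap \PP_{P'}$ is nonempty; transversality forces this intersection to have pure dimension $\dim X - \dim P'$, but if $\CX \cap U_{P'}$ were empty, the stratification $\CX \cap \PP_{P'} = \bigcup_{P'' \supsetneq P'} \CX \cap U_{P''}$ would produce only strictly smaller-dimensional pieces, a contradiction. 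Thus $\CX \subseteq \PP'$, and properness of $\CX \to \Spec \OO$ is inherited from $\PP \to \Spec \OO$. For the tropicality of $(X, \PP')$, the multiplication map $m : \CT \times_\OO \CX \to \PP'$ is surjective because its image is $\CT$-stable and meets every orbit of $\PP'$ by construction of $\Sigma'$; it is flat by equidimensionality of its fibers (which follows from the transverse stratification and Lemma~\ref{lemma:comp}) together with Cohen-Macaulayness of source and target. To handle the target cleanly, I would first refine $\Sigma$ via Proposition~\ref{prop:nc} so that $\PP'$ becomes smooth, which preserves both transversality and the conclusions to be proved, and then invoke miracle flatness. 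Transversality further gives smoothness of each $\CX \cap U_P$ as an open subset of the smooth $\CX \cap \PP_P$, so Proposition~\ref{prop:schon}(3) yields that $X$ is sch\"on. Finally, the irreducible components of the special fiber $\CX_k$ are the smooth $\CX_v$ for vertices $v \in \Sigma'$, and their multi-intersections $\CX_P = \bigcap_{v \in V(P)} \CX_v$ are smooth of codimension $\dim P + 1$ in $\CX$, giving the required simple normal crossings structure.

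For the reverse direction, assume $(X, \PP')$ is a normal crossings pair with $\PP'$ an open toric subset of $\PP$ corresponding to a face-closed subcomplex $\Sigma' \subseteq \Sigma$. Because $\Sigma'$ is produced by Proposition~\ref{prop:nc}, its polyhedra are simplicial. Proposition~\ref{prop:schon}(3) gives smoothness of each $\CX \cap U_P$, and the snc hypothesis on $\CX_k$ ensures that for every $P \in \Sigma'$ the intersection $\CX_P = \bigcap_{v \in V(P)} \CX_v$ is smooth of codimension $|V(P)| = \dim P + 1$ in $\CX$, matching $\codim_\PP \PP_P$. Since $\CX \subseteq \PP'$ we have $\CX \cap \PP_P = \CX \cap (\PP_P \cap \PP')$, so this establishes transversality in $\PP$. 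For $P \in \Sigma \setminus \Sigma'$, the face-closedness of $\Sigma'$ combined with $\CX \subseteq \PP'$ forces $\CX \cap \PP_P = \emptyset$, making transversality vacuous.

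The main obstacle is verifying flatness of $m$ in the forward direction when $\PP'$ is not automatically smooth; my plan is to resolve this by the refinement argument sketched above, reducing to miracle flatness for a map from a Cohen-Macaulay source to a smooth target with equidimensional fibers. A secondary bookkeeping point is confirming that such a refinement preserves the identification of $\Sigma'$ with $\{P : \CX \cap U_P \ne \emptyset\}$, which is automatic since refining $\Sigma$ only subdivides existing cells and $\CX$ meets the new strata through its intersection with the old ones.
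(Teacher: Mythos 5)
Your overall strategy is the same as the paper's: identify the fiber of the multiplication map $m:\CT\times_{\OO}\CX\to\PP'$ over a point of $U_P$ with $\CT_P\times(\CX\cap U_P)$, use transversality to see that these fibers are smooth of constant dimension, and deduce that $m$ is smooth by a miracle-flatness type argument (the paper simply cites ``the argument of Hacking, Lemma 2.6'' for this step and declares the converse clear). The genuine gap is in your workaround for the possible non-smoothness of the target $\PP'$. You propose to refine $\Sigma$ via Proposition~\ref{prop:nc} so that the target becomes smooth, asserting that this ``preserves both transversality and the conclusions to be proved.'' Neither assertion is justified. First, the conclusion concerns the \emph{specific} open subscheme $\PP'\subset\PP$ named in the statement; after refining you are working with a different toric scheme $\PP''$ and a different closure $\CX''$ of $X$ (the strict transform of $\CX$), and establishing that the refined pair is a normal crossings pair does not return the statement for $\PP'$ --- the paper only proves that tropicality passes to refinements, not to coarsenings. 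Second, and more seriously, transversality is not known to survive refinement: smoothness of the new closed strata $\CX''\cap\PP''_{P''}$ is essentially the converse direction of this very lemma applied to the refined pair, i.e.\ it would follow from $X$ being sch\"on and $(X,\PP'')$ being a normal crossings pair, which is exactly what is to be proved. So the refinement step is circular. (A smaller instance of the same circularity: you invoke Proposition~\ref{prop:schon}(3) to conclude sch\"onness, but condition (3) is quantified over \emph{tropical} pairs, so using it presupposes the flatness of $m$ that you are still in the middle of establishing; the non-circular route is via condition (2), using the identification of $m^{-1}(p)$ with both $\CT_P\times(\CX\cap U_P)$ and $\ini_wX$, which does not require flatness.)

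The repair is to do what the paper does: work with $m:\CT\times_{\OO}\CX\to\PP'$ and its original target throughout. The fibers over every orbit of $\PP'$ are nonempty (by your surjectivity argument) and smooth of constant dimension $\dim X$ by the transversality hypothesis, and Hacking's Lemma~2.6 argument then gives smoothness of $m$ directly; from this one reads off simultaneously that $(X,\PP')$ is tropical, that $X$ is sch\"on, and that $\CX_k$ is a simple normal crossings divisor in $\CX$ (being the preimage under the smooth map $m$, locally on $\CT\times\CX$, of the boundary strata of $\PP'$). Your preliminary observations --- that $\Sigma'=\{P:\CX\cap U_P\neq\emptyset\}$ is closed under passing to faces, and the dimension count behind it --- are correct and are a useful supplement to the paper's terse proof, as is your treatment of the converse.
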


\begin{proof}
Consider the multiplication map
$$m:\CT \times \CX \rightarrow \PP^{\prime}.$$
If $y$ is a point in $\PP^{\prime}$ in the torus orbit corresponding to a polyhedron $P$ in
the subcomplex $\Sigma^{\prime}$ of $\Sigma$ corresponding to $\PP^{\prime}$, then the fiber over 
$y$ is isomorphic to the product
$\CX \cap \PP^{\prime}_P$ with a torus.  By assumption,
this is smooth, so $m$ has smooth fibers.  The argument of~\cite{Hacking}, Lemma 2.6
then shows that $m$ is smooth.  It follows that $X$ is sch\"on and $(X,\PP^{\prime})$
is a normal crossings pair.  The converse is clear.
\end{proof}

Note that the lemma implies that $\trop(X)$ will be equal to the support of $\Sigma^{\prime}$ 
for all such $X$.  One can therefore use this result to study the space of sch\"on subvarieties of 
a toric variety over $K$ with a given tropicalization.  We will not pursue this here, beyond a few
straightforward observations.

Suppose $\PP$ is projective.  Fix an $X$ as in the lemma, and let $\Hilb(\PP)$ be the Hilbert
scheme over $\OO$ parameterizing subschemes of $\PP$ with the same Hilbert polynomial as the closure
of $X$.  Complex points of $\Hilb(\PP)$ correspond to subschemes of the special fiber of $\PP$; those 
that meet each $\PP_P$ transversely form an open subset $U_0$ of 
$\Hilb(\PP) \times_{\OO} \Spec k$.

Now if $y$ is a point of $\Hilb(\PP)(\overline{K})$, then $y$ corresponds to a subscheme $X_y$ of the 
general fiber of $\PP$ over a finite extension of $K$.  Then $X_y \cap T$ will satisfy the hypotheses
of Lemma~\ref{lemma:transversality} if, and only if, $y$ specializes to a point $y_0$
on the special fiber of $\Hilb(\PP)$ that lies in $U_0$.  The set of points
$y$ that specialize to $U_0$ forms a ``neighborhood of $U_0$'' in the rigid analytic topology
on $\Hilb(\PP).$  More precisely, let $\Hilb(\PP)^{\rig}$ denote the rigid analytic space
associated to the general fiber of $\Hilb(\PP)$; then $\Hilb(\PP)^{\rig}$ is equipped
with a ``reduction mod $\pi$'' map
$$\Hilb(\PP)^{\rig} \rightarrow \Hilb(\PP) \times_{\OO} \Spec k.$$
The preimage of $U_0$ under this map is an admissible open subset $U^{\rig}$ of
$\Hilb(\PP)^{\rig}$, and those $y \in \Hilb(\PP)(\overline{K})$ such that $X_y \cap T$ satisfies the 
hypotheses of Lemma~\ref{lemma:transversality} are precisely the $\overline{K}$-points of $U^{\rig}$.

If we restrict our attention to complete intersections, we can say more than this.
In particular fix a projective toric scheme $\PP$ over $\OO$, and ample line
bundles $L_1, \dots, L_s$ on $\PP$.  The space $\CH$ parameterizing tuples
$(D_1, \dots, D_s)$ such that for each $i$, $D_i$ is an effective divisor in the linear system
corresponding to $L_i$, and all the $D_i$'s intersect transversely, is an open subset
of a product of projective spaces over $\OO$.  

By Bertini's theorem, the set of points in $\CH(k)$ that correspond to divisors
$(D_1, \dots, D_s)$ in $\PP \times_{\OO} \Spec k$ such that $D_1 \cap \dots \cap D_s$ 
intersects each stratum $\PP_P$ of $\PP$ transversely is an open dense subset $U_0$ of
the special fiber of $\CH$.  The preimage of $U_0$ under the reduction map
$$\CH^{\rig} \rightarrow \CH \times_{\OO} \Spec k$$
is a (necessarily nonempty) admissible open subset $U^{\rig}$ of $\CH^{\rig}$; the
points of $U^{\rig}$ correspond precisely to those complete intersections
$D_1 \cap \dots \cap D_s$ whose intersection with $T$ satisfies the conditions of 
Lemma~\ref{lemma:transversality}.

Moreover, if $(D_1, \dots, D_s)$ is a $K$-point of $U^{\rig}$, and $X$ is the corresponding
complete intersection $D_1 \cap \dots \cap D_s \cap T$ in $T$, then for each polyhedron $P$ in
$\Sigma$, $\CX_P = D_1 \cap \dots \cap D_s \cap \PP_P$ is the intersection of ample divisors
in the smooth toric variety $\PP_P$, and is therefore either zero-dimensional or connected.

Lemma~\ref{lemma:cohomology} and Theorem~\ref{thm:main} now have immediate implications
for the cohomology of $\trop(X)$:

\begin{thm} \label{thm:CI}
Let $(D_1, \dots, D_s)$ be a $K$-point of $U^{\rig}$, and set
$$X = D_1 \cap \dots \cap D_s \cap T.$$
Then $H^r(\trop(X),\QQ_l)$ vanishes for $1 \leq r < \dim X$, and the natural map:
$$H^r(\trop(X),\QQ_l) \rightarrow W_0 H^r_{\et}(\overline{X}_{K^{\sep}},\QQ_l)$$
is injective for $r = \dim X$.
\end{thm}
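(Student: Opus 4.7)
The plan is to combine the three earlier results on cohomology of sch\"on varieties: Corollary~\ref{cor:CI}, Lemma~\ref{lemma:cohomology}, and (implicitly, via the former) Theorem~\ref{thm:main}. The only real work is to verify that the hypotheses of each apply to $X = D_1 \cap \dots \cap D_s \cap T$ for a $K$-point of $U^{\rig}$; once this is done, the conclusions chain together directly.

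First, I would check that $X$ satisfies the hypotheses of Corollary~\ref{cor:CI}. By the very definition of $U^{\rig}$, the reduction mod~$\pi$ of $(D_1,\dots,D_s)$ lies in $U_0$, so the intersection of the $D_i$ meets each toric stratum $\PP_P$ transversely. Applying Lemma~\ref{lemma:transversality} to the closure $\CX$ of $X$ in $\PP$, we conclude that $X$ is sch\"on and that $(X,\PP^{\prime})$ is a normal crossings pair for the open toric subscheme $\PP^{\prime}\subset\PP$ obtained by deleting torus orbits that do not meet $\CX$. On the generic fiber, $\overline{X} = D_1\cap\dots\cap D_s$ is a smooth complete intersection of ample divisors in the smooth projective toric variety $\PP_K$, and $\overline{X}\setminus X$ is a divisor with simple normal crossings. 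Thus all the hypotheses of Corollary~\ref{cor:CI} are satisfied, and we obtain $H^r(\Gamma_X,\QQ_l) = 0$ for $1\leq r<\dim X$.

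Next, I would verify the connectedness hypothesis of Lemma~\ref{lemma:cohomology}. As observed in the paragraph immediately preceding the theorem, for each polyhedron $P$ in the normal crossings decomposition $\Sigma$, the stratum $\CX_P = D_1\cap\dots\cap D_s\cap\PP_P$ is the transverse intersection of ample divisors in the smooth projective toric variety $\PP_P$, so by Bertini it is either zero-dimensional or connected. Hence Lemma~\ref{lemma:cohomology} applies, giving that the natural map $H^r(\trop(X),\QQ_l)\to H^r(\Gamma_X,\QQ_l)$ is an isomorphism for $0\leq r<\dim X$ and an injection for $r=\dim X$. Combining this with the vanishing from the previous step yields $H^r(\trop(X),\QQ_l)=0$ for $1\leq r<\dim X$. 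For the middle dimension, Theorem~\ref{thm:main} identifies $H^{\dim X}(\Gamma_X,\QQ_l)$ with $W_0 H^{\dim X}_{\et}(\overline{X}_{K^{\sep}},\QQ_l)$, and composing with the injection from Lemma~\ref{lemma:cohomology} gives the second claim.

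The result is essentially formal given the machinery already developed; there is no single hard step. If anything, the only subtle point is bookkeeping: one must be careful to distinguish the toric scheme $\PP$ (over $\OO$), its open subscheme $\PP^{\prime}$ used to form the normal crossings pair, and the generic fiber $\PP_K$ used as the ambient smooth projective toric variety in Corollary~\ref{cor:CI}, and to verify that the transversality condition coming from membership in $U^{\rig}$ is strong enough to feed all three into the same chain of implications. Everything else is a direct quotation of the cited statements.
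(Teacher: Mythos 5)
Your proposal is correct and follows essentially the same route as the paper: verify via Lemma~\ref{lemma:transversality} and the preceding discussion that $X$ is sch\"on with a normal crossings pair and connected (or zero-dimensional) strata, then chain together Theorem~\ref{thm:main}, Lemma~\ref{lemma:cohomology}, and Corollary~\ref{cor:CI}. The only difference is the order in which you invoke the vanishing of $H^r(\Gamma_X,\QQ_l)$ versus the comparison with $H^r(\trop(X),\QQ_l)$, which is immaterial.
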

\begin{proof}
The above discussion shows that $X$ is sch\"on and $(X,\PP)$ is a normal crossings pair.
We thus apply Theorem~\ref{thm:main} and Lemma~\ref{lemma:cohomology} to see that the
map
$$H^r(\trop(X),\QQ_l) \rightarrow W_0 H^r_{\et}(\overline{X}_{K^{\sep}},\QQ_l)$$
is an isomorphism for $0 \leq r < \dim X$ and injective for $r = \dim X$.  On the other
hand, $\overline{X}$ is a complete intersection in the general fiber of the smooth toric
variety $\PP \times_{\OO} \Spec K$.  The result thus follows from Corollary~\ref{cor:CI}.
\end{proof}



\begin{thebibliography}{}
\bibitem[B]{B}
Berkovich, V.
\emph{An analog of the {T}ate conjecture over local and finitely generated fields},
 Internat. Math. Res. Notices  (2000), no. 13,665--680.

 \bibitem[BGS]{BGS}
J. Burgos Gil and M. Sombra.
\newblock{When do the recession cones of a polyhedral complex form a fan?}
\newblock preprint, {\bf arXiv:1008.2608}
 
\bibitem[dJ]{dJ}
de Jong, A. J.  \emph{Smoothness, semistability, and alterations}, Inst. Hautes {\'E}tudes
Sci. Publ. Math. No. 83, (1996), 51--93.

\bibitem[D]{WeilII}
Deligne, P.  \emph{La conjecture de Weil II}, 
Inst. Hautes {\'E}tudes Sci. Publ. Math. no. 52 (1980), 137--252.

\bibitem[EKL]{EKL}
Einsiedler, M., Kapranov, M., and Lind, D.  \emph{Non-archimedean amoebas and tropical varieties},
J. reine und angewandte Mathematik 601 (2006) 139--157.

\bibitem[F]{Fulton}
Fulton, W.  Introduction to Toric Varieties, Princeton University Press, 1993.

\bibitem[GS]{GS}
Gross, M., Siebert, B. \emph{Mirror Symmetry via Logarithmic Degeneration Data II}, preprint {\bf arxiv:0709.2290}.

\bibitem[H]{Hacking}
Hacking, P.  \emph{The homology of tropical varieties}, 
Collect. Math. 59 (2008), no. 3, 263--273. 

\bibitem[HKT]{HKT}
Hacking, P., Keel, S., and Tevelev, J.  \emph{Stable pair, tropical, and log canonical
compact moduli of del Pezzo surfaces}, 
Invent. Math. 178 (2009), no. 1, 173--227. 

\bibitem[I]{Ito}
Ito, T.  \emph{Weight-monodromy conjecture over equal characteristic local fields},
Amer. J. Math.  127  (2005),  no. 3, 647--658.

\bibitem[KKMS]{KKMS}
Kempf, G., Knudsen, F., Mumford, D., and Saint-Donat, B.
\emph{Toroidal embeddings. I. }
Lecture Notes in Mathematics, Vol. 339. Springer-Verlag, Berlin-New York, 1973. 

\bibitem[KMM]{KMM}
Katz, E., Markwig, H., and Markwig, T.  \emph{The tropical $j$-invariant},
LMS J. Comput. Math. 12 (2009), 275--294. 

\bibitem[KS10]{TMNF}
Katz, E. and Stapledon, A. \emph{The tropical motivic nearby fiber},
preprint, {\bf arXiv:1007.0411.}

\bibitem[KS06]{KS}
Kontsevich, M., and Soibelman, Y.  \emph{Affine structures and non-Archimedean analytic
spaces}.  The unity of mathematics, 321--385, Birkh\"auser Boston, Boston, MA, 2006.

\bibitem[LQ]{LQ}
Luxton, M. and Qu, Z.  \emph{On Tropical Compactification}.  preprint, {\bf 0902.2009}.

\bibitem[MZ]{MZ}
Mikhalkin, G., and Zharkov, I.  \emph{Tropical curves, their Jacobians, and theta functions.}
Curves and abelian varieties, 203--230, Contemp. Math., 465, Amer. Math. Soc., Providence, RI, 2008.

\bibitem[NS]{NS}
Nishinou, T., Siebert, B. \emph{Toric degenerations of toric varieties and tropical curves},  
Duke Math. J.  135  (2006),  no. 1, 1--51.

\bibitem[P]{P}
Payne, S. \emph{Analytification is the limit of all tropicalizations}, Math. Res. Lett. 16 (2009), no. 3, 543--556. 


\bibitem[RZ]{RZ}
Rapoport, M., Zink, T.  \emph{\"Uber die lokale Zetafunktion von
Shimuravariet\"aten, Monodromiefiltrations und verschwindende Zyklen in
ungleicher Characteristik}, Inv. Math. 68 (1980), 21--101.

\bibitem[Si]{Si}
Silverman, J.  \emph{Advanced topics in the arithmetic of elliptic curves}.
Graduate Texts in Mathematics, 241, Springer-Verlag, New York, 1994.

\bibitem[S]{Smirnov}
Smirnov, A. L.  \emph{Torus schemes over a discrete valuation ring},  Algebra i Analiz  
8  (1996),  no. 4, 161--172;  translation in  St. Petersburg Math. J.  8  
(1997),  no. 4, 651--659.

\bibitem[Sch]{Schneider}
Schneider, P.  \emph{Basic notions of rigid analytic geometry}, in Galois Representations
in Arithmetic Algebraic Geometry (Durham, 1996), London Math. Soc. Lecture Note Ser. 254,
Cambridge University Press (1998),  369--379.

\bibitem[SGA]{SGA}
Grothendieck, A. \emph{Mod{\`e}les de N{\'e}ron et monodromie} SGA 7 I, Expos{\'e} IX.
313--523, Lecture Notes in Math. 288, Springer-Verlag, New York, 1972.  

\bibitem[Sp1]{Speyer}
Speyer, D.  \emph{Tropical Geometry}, PhD thesis, UC Berkeley, 2005.

\bibitem[Sp2]{Sp3}
Speyer, D.  \emph{Uniformizing tropical curves I: genus zero and one}, preprint
{\bf arxiv:0711.2677}.

\bibitem[T]{Tevelev}
Tevelev, J.  \emph{Compactifications of subvarieties of tori},
Amer. J. Math.  129  (2007),  no. 4, 1087--1104.



\end{thebibliography}
\end{document}